\documentclass[reqno,11pt]{amsart}
\usepackage{setspace}
\makeatletter

\usepackage[T1]{fontenc}
\usepackage{lmodern}

\usepackage[a4paper,top=1in,bottom=1in,left=1in,right=1in,marginparwidth=1.75cm]{geometry}
\usepackage{microtype}
\usepackage{textcmds}  % amsrefs needs this, but it has to be loaded early to avoid re-defs.
\usepackage{amsmath, amssymb, amsfonts, amstext, verbatim, amsthm, mathrsfs, stmaryrd}
\usepackage{thmtools}
\usepackage{appendix}
\usepackage{mathdots}
\usepackage{ytableau}
\usepackage[all,cmtip,2cell]{xy}
\usepackage{pgf,tikz,pgfplots}
\pgfplotsset{compat=1.15}
\usetikzlibrary{arrows}
\usetikzlibrary{positioning}
\usetikzlibrary{quotes}
\usepackage{tikz-cd}
\usepackage{quiver}
\usepackage{mathtools}
\usepackage[dvipsnames]{xcolor}

\usepackage{graphicx}
\graphicspath{ {images/} }

\tikzset{
  symbol/.style={
    draw=none,
    every to/.append style={
      edge node={node [sloped, allow upside down, auto=false]{$#1$}}}
  }
}

\usepackage{enumerate}
\usepackage{enumitem}
\setlist{topsep=1em, itemsep=1em}
\usepackage[colorlinks=true,linkcolor=blue,citecolor=blue,urlcolor=blue,citebordercolor={0 0 1},urlbordercolor={0 0 1},linkbordercolor={0 0 1}]{hyperref} %needs to be loaded after most things
\usepackage[shortalphabetic]{amsrefs}
\usepackage[nameinlink, capitalize]{cleveref}
\usepackage{enotez}
\setenotez{backref=true}

\def\mydefc#1{\expandafter\def\csname c#1\endcsname{\mathcal{#1}}}
\def\mydefallc#1{\ifx#1\mydefallc\else\mydefc#1\expandafter\mydefallc\fi}
\mydefallc ABCDEFGHIJKLMNOPQRSTUVWXYZ\mydefallc

\def\mydefb#1{\expandafter\def\csname b#1\endcsname{\mathbf{#1}}}
\def\mydefallb#1{\ifx#1\mydefallb\else\mydefb#1\expandafter\mydefallb\fi}
\mydefallb ABCDEFGHIJKLMNOPQRSTUVWXYZ\mydefallb

\def\mydeffrac#1{\expandafter\def\csname frac#1\endcsname{\mathscr{#1}}}
\def\mydeffracall#1{\ifx#1\mydeffracall\else\mydeffrac#1\expandafter\mydeffracall\fi}
\mydeffracall ABCDEFGHIJKLMNOPQRSTUVWXYZ\mydeffracall

\theoremstyle{plain}
\newtheorem{thm}{Theorem}[section]
\newtheorem{cor}[thm]{Corollary}
\newtheorem{lem}[thm]{Lemma}

\newtheorem{conj}[thm]{Conjecture}
\newtheorem{prop}[thm]{Proposition}

\theoremstyle{definition}
\newtheorem{rem}[thm]{Remark}
\newtheorem{defn}[thm]{Definition}

\newtheorem{ex}[thm]{Example}

\newtheorem{introthm}{Theorem}

\crefname{thm}{theorem}{theorems}
\crefname{cor}{corollary}{corollaries}
\crefname{lem}{lemma}{lemmas}
\crefname{prop}{proposition}{propositions}
\crefname{defn}{definition}{definitions}
\crefname{rem}{remark}{remarks}
\crefname{claim}{claim}{claims}
\crefname{conj}{conjecture}{conjectures}
\crefname{ex}{example}{examples}
\crefname{fact}{fact}{facts}
\crefname{quest}{question}{questions}

\Crefname{thm}{Theorem}{Theorems}
\Crefname{cor}{Corollary}{Corollaries}
\Crefname{lem}{Lemma}{Lemmas}
\Crefname{prop}{Proposition}{Propositions}
\Crefname{defn}{Definition}{Definitions}
\Crefname{rem}{Remark}{Remarks}
\Crefname{claim}{Claim}{Claims}
\Crefname{conj}{Conjecture}{Conjectures}
\Crefname{ex}{Example}{Examples}
\Crefname{fact}{Fact}{Facts}
\Crefname{quest}{Question}{Questions}

\def\rm{\mathrm}

\def\K{\mathrm{K}}

\DeclareMathOperator{\rk}{rank}

\newcommand{\DCoh}{\mathcal{D}^{b}}

\DeclareMathOperator{\Coh}{Coh}

\DeclareMathOperator{\Cone}{Cone}

\DeclareMathOperator{\End}{End}

\DeclareMathOperator{\GL}{GL}

\DeclareMathOperator{\Hom}{Hom}

\DeclareMathOperator{\NS}{NS}

\newcommand{\norm}[1]{\left\lVert#1\right\rVert}

\DeclareMathOperator{\Stab}{Stab}

\DeclareMathOperator{\im}{im}

\DeclareMathOperator{\td}{td}

\def\bf{\mathbf}
\def\cal{\mathcal}

\newcommand{\cl}{{\cal L}}
\newcommand{\ZZ}{\mathbf{Z}}

\renewcommand{\H}{{\rm H}}
\newcommand{\Gr}{{\rm {Gr}}}

\newcommand{\QQ}{{\mathbf{Q}}}

\newcommand{\ko}{\mathcal{O}}

\newcommand{\PP}{\mathbf{P}}

\newcommand{\ch}{\mathrm{ch}}

\usepackage{pbox}
\usepackage[normalem]{ulem}

\makeatletter

\usepackage{babel}
\begin{document}

\title[Stability conditions supported on Lefschetz classes]{Stability conditions supported on Lefschetz classes}

\author[F. Giovenzana]{Franco Giovenzana}
\address{\parbox{0.9\textwidth}{Universit\'e Paris-Saclay, CNRS, Laboratoire de Math\'ematiques d'Orsay\\[1pt]
Rue Michel Magat, B\^at. 307, 91405 Orsay, France
\vspace{1mm}}}
\email{\url{franco.giovenzana@universite-paris-saclay.fr}}

\author[A. Robotis]{Antonios-Alexandros Robotis}
\address{\parbox{0.9\textwidth}{Columbia University, Department of Mathematics\\[1pt]
2990 Broadway, New York, NY
\vspace{1mm}}}
\email{\url{a.robotis@columbia.edu}}

\author[F. Rota]{Franco Rota}
\address{\parbox{0.9\textwidth}{Universit\'e Paris-Saclay, CNRS, Laboratoire de Math\'ematiques d'Orsay\\[1pt]
Rue Michel Magat, B\^at. 307, 91405 Orsay, France
\vspace{1mm}}}
\email{\url{franco.rota@universite-paris-saclay.fr}}

\author[V. Zuliani]{Vanja Zuliani}
\address{\parbox{0.9\textwidth}{Universit\'e Paris-Saclay, CNRS, Laboratoire de Math\'ematiques d'Orsay\\[1pt]
Rue Michel Magat, B\^at. 307, 91405 Orsay, France
\vspace{1mm}}}
\email{\url{vanja.zuliani@universite-paris-saclay.fr}}

\begin{abstract}
    Recently, C.~Li constructed stability conditions on the derived categories of all smooth complex projective varieties. These stability conditions satisfy the support property of Kont\-sevich--Soibelman with respect to the lattice in cohomology generated by powers of an ample class. We extend Li's results to construct stability conditions with full support on Lefschetz type varieties whose algebraic cohomology is generated by divisor classes.
    This includes all smooth complex projective varieties of dimension at most 3 and smooth projective toric varieties.
\end{abstract}

\maketitle

\tableofcontents

\addtocontents{toc}{\protect\setcounter{tocdepth}{1}}

\linespread{1.15}\selectfont

\section{Introduction}

Stability conditions on triangulated categories were introduced by Bridgeland \cite{Br07} over twenty years ago as a mathematical interpretation of Douglas' theory of $\Pi$-stability in physics \cite{DouglasDbranes}. Since then, stability conditions have found a plethora of applications in algebraic geometry via wall-crossing techniques --- see \cites{BMunreasonable,ProjbiratBM,Mukaisprogram} among many other examples. All the while, however, a fundamental gap remained in the theory: there was no general existence result for stability conditions on the derived categories of smooth projective varieties. Though progress was made over the years on this foundational question, there were no systematic existence results available outside of dimensions $1$ and $2$ \cites{ABLStabilitysurf,Br07,BridgelandK3,Macricurves}, and some examples in dimension $3$ \cites{stabilityonFanothreefolds,LiQuintic}.

Recent groundbreaking work of Chunyi Li \cite{ChunyiLiRemark} proved the existence of stability conditions on $\DCoh(X)$ for $X$ any smooth complex projective variety, by constructing stability conditions on $\bP^n$ which can be pulled back to $X$ along a closed immersion $X\to \bP^n$. A desirable feature of Li's construction is that the resulting stability conditions are \emph{geometric} in that all skyscraper sheaves of points on $X$ are stable of the same phase. 

The modern definition of a stability condition \cite{Bayer_short} requires the \emph{support property}, as formulated by Kontsevich--Soibelman \cite{KS08} --- see Definition~\ref{D:stability}. To state the support property for $\DCoh(X)$, one chooses a surjective homomorphism $v:\rm{K}_0(X)\to \Lambda$, where $\Lambda$ is a finite rank free Abelian group and one defines a corresponding space $\Stab_{(\Lambda,v)}(X)$ of stability conditions. Bridgeland's deformation theorem \cite{Br07}, rephrased as in \cite{Bayer_short}, implies that $\Stab_{(\Lambda,v)}(X)$ has a canonical structure of a complex manifold, locally modeled on $\Hom(\Lambda,\bC)$. In particular, the higher the rank of the lattice $\Lambda$, the greater the number of deformation directions in $\Stab_{(\Lambda,v)}(X)$.

It is reasonable to wonder if there is a canonical choice of homomorphism $v:\K_0(X) \to \Lambda$ for a given variety $X$. Physical reasoning as in \cite{AspinwallDouglasDbrane}*{p. 9} suggests that the central charge $Z:\K_0(X) \to \bC$ of stability conditions on $X$ should depend only on the Chern classes of sheaves on $X$. In particular, a natural and maximal choice of lattice is given by $\cH^\bullet(X):=\im(\ch: \K_0(X) \to  \H^\bullet_{\rm{alg}}(X)_{\bQ})$. Thus, we say that a stability condition has \emph{full support} if it satisfies the support property with respect to $\ch:\K_0(X) \to \cH^\bullet(X)$.  

By contrast, the lattice $\Lambda_H$ used in \cite{ChunyiLiRemark} to formulate the support property can be identified with subgroup of $\H^\bullet(X,\bQ)$ generated by the powers of the hyperplane class $H$ pulled back along the closed immersion $X\to \bP^n$. The rank of $\Lambda_H$ is typically small compared to that of $\cH^\bullet(X)$. 

Let $L^\bullet_X$ denote the integral subalgebra of $\H^\bullet(X,\bQ)$ generated by the divisor classes, called the Lefschetz algebra, and let $v_X := \ch(-)\cup \sqrt{\td(X)}$. We write $\cH^\bullet_M(X) := \sqrt{\td(X)} \cH^\bullet(X) = \im(v_X)$. The main results of the present work are summarized as:

\begin{introthm}
[= Theorem~\ref{T:ellipticconditions} + Corollary~\ref{C:maincorollary}]
    Let $X$ be a smooth complex projective variety. There exist a closed immersion $i:X\to \prod_{i=1}^k \bP^{n_i} =: \bP^\chi$ and geometric stability conditions $\sigma$ on $\DCoh(X)$ which satisfy the support property with respect to $\im(i_* \circ v_X:\cH^\bullet_M(X)\to \cH^\bullet_M(\bP^\chi))$. 
\label{T:introthmA}
\end{introthm}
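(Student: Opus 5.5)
Our plan is to imitate Li's strategy: first construct stability conditions on the ambient product $\bP^\chi=\prod_{i=1}^k\bP^{n_i}$ satisfying the support property with respect to the full lattice $\cH^\bullet_M(\bP^\chi)$, and then pull them back along a closed immersion $i:X\to\bP^\chi$. For the immersion we choose very ample line bundles $L_1,\dots,L_k$ on $X$ whose classes generate (rationally) the relevant part of $\NS(X)$. Concretely, we start from an ample $H$ and divisors $D_j$ spanning $\NS(X)_\bQ$, replace them by very ample classes $mH+D_j$, and embed $X$ by the product of the maps $\varphi_{|L_i|}$; this is a closed immersion because one factor is already an embedding. The pullbacks of the hyperplane classes $h_i$ then generate the divisor part of the Lefschetz algebra $L^\bullet_X$ rationally. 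By the projection formula, $i_*$ maps $\cH^\bullet_M(X)$ to a sublattice of $\cH^\bullet_M(\bP^\chi)$, which is spanned by monomials in the $h_i$ since $\cH^\bullet(\bP^\chi)=\bQ[h_1,\dots,h_k]/(h_i^{n_i+1})$.

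\textbf{Step 1} produces, on $\bP^\chi$, geometric stability conditions with full support. Here we run Li's inductive construction: a central charge of the form $Z(E)=\int e^{-(B+i\omega)}\ch(E)\cdot(\text{correction})$ together with a heart obtained by iterated tilting. The support property is governed by a quadratic form $Q$ on $\cH^\bullet(\bP^\chi)$ that is negative definite on $\ker Z$ and nonnegative on semistable objects. We would try to get $Q$ from the product structure, for instance via Bogomolov--Gieseker type inequalities for all mixed monomials $h^{\alpha}$, which come from restricting to complete intersections of products of linear subspaces. An alternative is to realise $\bP^\chi$ inside a big $\bP^N$ via Segre--Veronese, or to deform Li's stability with respect to $\Lambda_H$ in the extra directions.

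\textbf{Step 2} pulls back to $X$. For a closed immersion, $i_*$ is faithful and sends skyscrapers to skyscrapers, so we use the standard induced construction: the heart is $\{E : i_*E\in\cA\}$ and the central charge is $Z\circ i_*$. Via Li's argument (or Polishchuk's induced $t$-structures), this gives a stability condition on $\DCoh(X)$. It factors through $i_*\circ v_X$, and semistable objects pull back to semistable objects. Hence the form $Q\circ i_*$ gives the support property on the image lattice $\im(i_*\circ v_X)$. Geometricity is inherited because skyscrapers $\ko_x$ map to skyscrapers $\ko_{i(x)}$, which are stable of the same phase. Grothendieck--Riemann--Roch, which gives $i_*(\ch(E)\sqrt{\td X})=\ch(i_*E)\sqrt{\td\bP^\chi}$ up to the normal bundle correction, explains why the Mukai vector $v_X$ is the natural choice. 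That correction has to be absorbed: we twist $Z$ so that it reads as $i_*v_X$.

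\textbf{Main obstacle.} The hard step is Step 1, namely the full support property on the product $\bP^\chi$. Li's inequalities only control the powers of a single class $H$, and the mixed classes $h^\alpha$ require new Bogomolov-type inequalities whose kernel behaviour must still make $Q$ negative definite. We would first try to prove them by induction on dimension, restricting to divisors in $|h_i|$ for the various $i$ as in Li's restriction argument. A secondary concern is checking that the kernel of $Z\circ i_*$ on the image lattice stays negative definite, but this holds because it is a restriction of a negative definite form.
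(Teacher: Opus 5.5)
Your two-step architecture is the paper's: first geometric full-support conditions on $\bP^\chi$, then induction along $i$ via $Z\circ i_*$ and $\{E: i_*E\in\cP(\theta)\}$. But Step~1, which carries all the content, is not proved. You propose to find Bogomolov--Gieseker type inequalities for every mixed monomial $h^\alpha$ and assemble a quadratic form $Q$, and you yourself flag this as unresolved. No such family of inequalities is known on $\bP^\chi$ in general, and your fallbacks cannot work:
\begin{itemize}
\item Pulling back from a Segre--Veronese embedding $\bP^\chi\subset\bP^N$ only gives support on $\im(i_*)\subseteq\H^\bullet(\bP^N,\bQ)$. Since each $\H^{2j}(\bP^N,\bQ)$ is one-dimensional, this has rank at most $\dim\bP^\chi+1<\prod_i(n_i+1)$ for $k\ge 2$.
\item Bridgeland deformation of a $\Lambda_H$-supported condition only moves $Z$ inside $\Hom(\Lambda_H,\bC)$. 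So ``deforming in the extra directions'' presupposes the support property you are after.
\end{itemize}
The missing idea is to construct nothing directly on $\bP^\chi$. Take the conditions $\sigma^{a,b}$ on $E^n$ of Theorem~\ref{thm_stab_on_En}, which are supported on the lattice $\Lambda$ recording the numbers $H_{j_1}\cdots H_{j_k}\ch_{n-k}$. Observe that they are $\mathsf{W}_{B_\chi}$-invariant, and descend them along $\pi\colon E^n\to\bP^{1,n}\to\bP^\chi$ by declaring $F\in\cP(\theta)$ iff $\pi^*F\in\cP_{\sigma^{a,b}}(\theta)$. That this is a slicing is Li's descent criterion, cf.\ Propositions~\ref{P:pffromelliptic} and~\ref{prop6.2}. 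The support property then costs nothing. Indeed, $\ch(\pi^*F)=\pi^*\ch(F)$, and $\alpha\circ\pi^*\colon\cH^\bullet(\bP^\chi)\to\Lambda$ is injective by K\"unneth for $E\to\bP^1$ together with $\H^\bullet(\bP^\chi,\bQ)\cong\H^\bullet(\bP^{1,n},\bQ)^{\mathsf{S}_\chi}$. Hence $\lVert\alpha\pi^*(\cdot)\rVert$ is a norm on $\cH^\bullet(\bP^\chi)_\bR$, and the inequality for $\sigma^{a,b}$ transfers verbatim (Proposition~\ref{P:pushforward}). The mixed classes need no separate treatment, since they pull back into the span of the $H_{j_1}\cdots H_{j_k}$.

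Step~2 is also not automatic for an arbitrary geometric $\sigma$ on $Y=\bP^\chi$: faithfulness of $i_*$ does not make $\{E:i_*E\in\cP_\sigma(\theta)\}$ a slicing. The paper obtains HN filtrations from Polishchuk's Lemma~2.2.2, which needs $i_*\cO_X\otimes\cP_\sigma(\theta)\subseteq\cP_\sigma[\theta,\infty)$. This is checked on a resolution of $i_*\cO_X$ by sums of $\cO(-a_j)$ ending in a sheaf $F[n]$:
\begin{itemize}
\item the Bayer property $\sigma\preceq\sigma\otimes\cO(1)$ and the restriction-$N$ property $\sigma\otimes\cO(N)\preceq\sigma[1]$, with $jN\ge a_j$, give $\sigma\preceq\sigma\otimes\cO(-a_j)[j]$;
\item geometricity puts $E\otimes F[n]$ in $\cP_\sigma(1,3n]$.
\end{itemize}
So Step~1 must also deliver these two properties, which your plan never mentions. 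In the paper they come from $\sigma^{a,b}\otimes\cO(kH)\prec\sigma^{a,b}[1]$ for $k\le 2N$, from Corollary~\ref{cor_from_En_to_P1n}(2), and from $\pi^*\cO_{\bP^\chi}(1)=\cO(2H)$.

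Finally, your normal-bundle worry is legitimate. GRR gives $v_Y(i_*E)=i_*\bigl(v_X(E)\,\td(N_{X/Y})^{-1/2}\bigr)$, whereas the paper's proof uses $v_Y\circ i_*=i_*\circ v_X$ from Example~\ref{E:twistchern}. Twisting $Z$ does not remove this factor in general, since it need not be pulled back from $Y$. What the induced-stability argument yields directly is the support property for $v_Y\circ i_*$, because the relevant ratio is $\lvert Z(i_*E)\rvert/\lVert v_Y(i_*E)\rVert$.
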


We say that a variety $X$ is of \emph{Lefschetz type} if the cup product restricted to the Lefschetz algebra $L_X^\bullet \otimes_{\bZ} \bQ $ is non-degenerate, in other words, $L_X^\bullet \otimes_{\bZ} \bQ$ satisfies Poincaré duality. 
Theorem~\ref{T:introthmA} has the following consequence: 

\begin{introthm}[= Corollary~\ref{C:fullsupport}]
\label{T:introthm_B}
    If $X$ is a smooth complex projective variety of Lefschetz type for which $L_X^\bullet \otimes_{\bZ} \bQ =\H^\bullet_{\rm{alg}}(X)_{\bQ}$, then $\DCoh(X)$ admits geometric stability conditions with full support.
\end{introthm}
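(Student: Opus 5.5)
We deduce Theorem~\ref{T:introthm_B} from Theorem~\ref{T:introthmA} by showing that, under the hypotheses, the lattice in Theorem~\ref{T:introthmA} is, up to finite index and rational isomorphism, all of $\cH^\bullet_M(X)$. The support property only depends on $\Lambda\otimes\bQ$ together with a quadratic form negative definite on $\ker Z$. So it suffices to show that the map $i_*\colon \cH^\bullet_M(X)\otimes\bQ\to \cH^\bullet_M(\bP^\chi)\otimes\bQ$ is injective. Equivalently, $v_X$ followed by $i_*$ should detect all of $\cH^\bullet(X)$. Then $v$ factors as $\K_0(X)\to\cH^\bullet_M(X)\hookrightarrow \im(i_*\circ v_X)$, with the second arrow an isomorphism onto a full-rank sublattice after tensoring with $\bQ$. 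A support-property quadratic form $Q$ on $\im(i_*\circ v_X)$ then pulls back to one on $\cH^\bullet_M(X)$. Since $\sqrt{\td(X)}$ is invertible, this gives full support for $\ch$.

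The key step is injectivity of $i_*$ on $L^\bullet_X\otimes\bQ=\H^\bullet_{\rm alg}(X)_\bQ$. I would choose the embedding $i$ (already constructed in the proof of Theorem~\ref{T:introthmA}) so that the pullbacks $h_j:=i^*H_j$ of the hyperplane classes of the factors $\bP^{n_j}$ generate $\NS(X)_\bQ$. This is done by taking very ample divisors $D_1,\dots,D_k$ spanning $\NS(X)_\bQ$ and embedding by the product of the corresponding maps. Then $i^*\colon \H^\bullet(\bP^\chi,\bQ)\to L^\bullet_X\otimes\bQ$ is surjective, because the source is generated by the $H_j$ and $L^\bullet_X$ by divisor classes.

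Next, use the projection formula and Poincaré duality on $L^\bullet_X$. Suppose $\alpha\in L^\bullet_X\otimes\bQ$ satisfies $i_*\alpha=0$. Then for every $\beta\in\H^\bullet(\bP^\chi)$,
\[
\int_X \alpha\cdot i^*\beta=\int_{\bP^\chi} i_*\alpha\cdot\beta=0 .
\]
Since $i^*$ surjects onto $L^\bullet_X\otimes\bQ$, $\alpha$ pairs to zero with all of $L^\bullet_X\otimes\bQ$. The Lefschetz type hypothesis (non-degeneracy of the cup product on $L^\bullet_X\otimes\bQ$) then forces $\alpha=0$. Also, $\cH^\bullet_M(X)\otimes\bQ$ lies in $\H^\bullet_{\rm alg}(X)_\bQ=L^\bullet_X\otimes\bQ$, so this suffices.

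The main obstacle I expect is bookkeeping rather than conceptual. One must check that the lattice in Theorem~\ref{T:introthmA} is really the image of $\cH^\bullet_M(X)$ with compatible $\bQ$-structures, possibly after twisting by $\sqrt{\td}$ factors and Grothendieck–Riemann–Roch on $\bP^\chi$. One must also check that the embedding used there can be taken to have the $h_j$ spanning $\NS(X)_\bQ$. If the construction of Theorem~\ref{T:introthmA} allows an arbitrary product embedding, I would simply insert this choice. Passing from a lattice to a finite-index sublattice or overlattice preserves the support property, with a rescaled constant. Finally, geometricity is inherited directly from Theorem~\ref{T:introthmA}.
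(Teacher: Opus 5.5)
Your proposal is correct and takes essentially the same route as the paper. You choose very ample $\cL_1,\dots,\cL_k$ whose first Chern classes span $\NS(X)_\bQ$, so that $i^*\H^\bullet(\bP^\chi,\bQ)=L^\bullet_X\otimes\bQ$, and deduce injectivity of $i_*$ on $\H^\bullet_{\rm{alg}}(X)_\bQ$ from the projection formula plus Poincar\'e duality on $L^\bullet_X\otimes\bQ$ (your adjunction argument is the contrapositive of the paper's Lemma~\ref{L:restrictioninjective}); you then transfer the support property from $\cH^\bullet_M(i)$ back to $\cH^\bullet_M(X)$ as in Proposition~\ref{P:pullback}, phrasing it with a quadratic form where the paper uses an equivalent norm.
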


In Example~\ref{ex:examples}, we collect classes of varieties for which Theorem~\ref{T:introthm_B} applies. Among these are all smooth projective varieties of dimension $\le 3$ and all smooth projective toric varieties. 

\subsection*{Relation to NMMP}
One of the goals of the Noncommutative Minimal Model Program (NMMP) of Halpern-Leistner \cite{NMMP} is to obtain ``canonical'' semi\-orthogonal decompositions of $\DCoh(X)$ for $X$ a smooth projective variety, by studying paths in its space of stability conditions. 
Motivated partially by physical heuristics as in  \cite{AspinwallDouglasDbrane}, Halpern-Leistner's formulation requires the existence of stability conditions on $\DCoh(X)$ with full support --- see \cite{NMMP}*{Conj. I}. The present work verifies this conjecture for many important examples.

In the context of the NMMP, a natural reason for considering stability conditions defined with respect to $\ch:\K_0(X) \to \cH^\bullet(X)\subset \H^\bullet_{\rm{alg}}(X)_{\bQ}$ is that the inclusion $\H^\bullet_{\rm{alg}}(X)_{\bC} \hookrightarrow \H^\bullet(X,\bC)$ allows one to define paths in the space of central charges $\Hom(\H^\bullet_{\rm{alg}}(X),\bC)$ using the quantum differential equation of $X$ --- see \cite{GGI}. These paths are conjectured to lift to quasi-convergent paths in the sense of \cite{quasiconvergence} which then give rise to decompositions of $\DCoh(X)$. See the works \cites{karube,KRZ,zuliani} for instances of this principle.

\subsection*{Related work}
The day before posting the first version of the present work we learned of work of Y. Cheng, which appeared on the day of posting, which shows that the mass-Hom bound, introduced in \cite{augmented}, implies the support property for numerical stability conditions \cite{YCheng}*{Thm. 2.1}. This, combined with the recent work \cite{2026stab_cond_modulispaces}, implies the support property for numerical stability conditions on all projective schemes over a field. 

For smooth and projective varieties for which the standard conjectures are known to hold, the Chern character descends to an isomorphism $\K_{\rm{num}}(X)_{\bQ} \to \H^\bullet_{\rm{alg}}(X)_{\bQ}$. Since we use the veracity of Standard conjecture A to verify the ``Lefschetz type'' condition in Example~\ref{ex:examples}, it follows that Theorem~\ref{T:introthm_B} can essentially be deduced from \cite{YCheng}*{Thm. 2.1}. Nevertheless, we feel that the present note is worth publishing since the techniques used are different than Cheng's and provide an elementary proof of the full support property, albeit in a restricted class of examples.

\subsection*{AI statement} Though we used Large Language Models for literature searches, all of the contents of this article were conceived of and written by the authors. % --- all em dashes are our own.

\subsection*{Acknowledgements} The authors thank Yiran Cheng for advanced notice of his article. We also thank Emanuele Macr\`{i} for many enlightening conversations about stability conditions and for several useful corrections to the first version of this work.

A.R. was supported by NSF grant DMS-250340. F. R. was supported by the European Union’s Horizon 2020 Research and Innovation Programme under the Marie Skłodowska-Curie grant agreement n. 101147384 (\href{https://cordis.europa.eu/project/id/101147384}{CHaNGe}).
F.G. and V.Z. were supported by ERC Synergy Grant 854361 HyperK. 

A.R. thanks Maria Teresa for her support and understanding during the writing of this paper. 

\section{Preliminaries}
Let $\cD$ be a $\bC$-linear triangulated category. To fix notation and terminology, we recall the notion of a stability condition on $\cD$.

\begin{defn}
\label{D:stability}
    A \emph{slicing} $\cP$ on $\cD$ is a collection of full additive subcategories $\{\cP(\phi)\}_{\phi \in \bf{R}}$ of $\cD$ such that: \vspace{-2mm}
    \begin{enumerate}
        \item $\phi_1>\phi_2\Rightarrow \Hom_{\cD}(\cP(\phi_1),\cP(\phi_2)) = 0$, \vspace{-2mm}
        \item $\cP(\phi)[1] = \cP(\phi+1)$ for all $\phi \in \bf{R}$, and \vspace{-2mm}
        \item for every non-zero object $E$ of $\cD$ there exists a sequence of real numbers $\phi_1>\cdots>\phi_n$ and a sequence of morphisms $0 = E_0 \to E_1\to \cdots \to E_n = E$ such that $\Cone(E_{i-1}\to E_i) \in \cP(\phi_i)$ for each $i=1,\ldots, n$. This sequence of morphisms is referred to as a \emph{Harder--Narasimhan filtration} of $E$. \vspace{-2mm}
    \end{enumerate}
    
    A \emph{pre-stability condition} on $\cD$ is a pair $(Z,\cP)$ where $\cP$ is a slicing and $Z\in \Hom_{\bf{Z}}(\K_0(\cD),\bf{C})$ is called the \emph{central charge} such that for all $\phi \in \bf{R}$ and all non-zero $E\in \cP(\phi)$ we have 
    \[
        Z(E) \in \bf{R}_{>0}\cdot \exp(i\pi \phi).
    \]
    Such an object $E$ is called \emph{semistable} of phase $\phi$.

    Let $\Lambda$ be a free Abelian group of finite rank, i.e. a lattice and let $v:\K_0(\cD)\to \Lambda$ be a rationally surjective group homomorphism.\footnote{Rationally surjective means that the induced map $\K_0(\cD)_\bQ\to \Lambda_\bQ$ is surjective.} A \emph{stability condition supported on} $(\Lambda,v)$ is a pre-stability condition $\sigma=(Z,\cP)$ such that $Z$ factors through $v$ and such that there is a norm $\norm{\:\cdot\:}$ on $\Lambda_{\bR}$ such that
    \begin{equation}
        \inf_{0\ne E\in \cP(\theta)} \frac{\lvert Z(E)\rvert}{\lVert v(E)\rVert} > 0.
    \end{equation}
\end{defn}

We denote by $\Stab_{(\Lambda,v)}(\cD)$ the set of stability conditions supported on $v:\rm{K}_0(\cD) \to \Lambda$. By Bridgeland's deformation theorem \cites{Bayer_short,Br07}, $\Stab_{(\Lambda,v)}(\cD)$ has the structure of complex manifold such that the map $\Stab_{(\Lambda,v)}(\cD) \to \Hom(\Lambda,\bC)$ given by $(Z,\cP)\mapsto Z$ is a local biholomorphism. In particular, the dimension of $\Stab_{(\Lambda,v)}(\cD)$ as a complex manifold equals the rank of $\Lambda$.

The lattice $\cH^\bullet(X):=\im(\ch:\rm{K}_0(X) \to \H^\bullet_{\rm{alg}}(X)_{\bQ})$ gives an integral structure on $\H^\bullet_{\rm{alg}}(X)_{\bQ}$, i.e. it is an Abelian subgroup such that $\cH^\bullet(X)\otimes \bQ = \H^\bullet_{\rm{alg}}(X)_{\bQ}.$ 

\begin{lem}
\label{L:changeoflattice}
    Let $\Lambda$ be an integral structure on $\H^\bullet_{\rm{alg}}(X)_{\bQ}$. There exists $\alpha\in \GL(\H^\bullet_{\rm{alg}}(X)_{\bQ})$ such that $\alpha\cdot \cH^\bullet(X) = \Lambda$. Furthermore, $\sigma = (Z,\cP)$ is supported on $(\cH^\bullet(X),\ch)$ if and only if it is supported on $(\Lambda,\alpha\circ \ch).$
\end{lem}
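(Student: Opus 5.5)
The plan is to split the statement into a linear-algebra part, which produces $\alpha$, and a comparison-of-norms part, which gives the equivalence of the support properties.

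\emph{Existence of $\alpha$.} Set $V:=\H^\bullet_{\rm{alg}}(X)_{\bQ}$ and $n=\dim_\bQ V$. Both $\cH^\bullet(X)$ and $\Lambda$ are integral structures on $V$. I first check that each is a free Abelian group of rank $n$: it is a finitely generated subgroup (for $\cH^\bullet(X)$, as the image of the Chern character of coherent sheaves it has bounded denominators and lies in a lattice of $V$), it is torsion-free, and it spans $V$ over $\bQ$. Choose $\bZ$-bases $e_1,\dots,e_n$ of $\cH^\bullet(X)$ and $f_1,\dots,f_n$ of $\Lambda$. Both are $\bQ$-bases of $V$. Let $\alpha\in\GL(V)$ be the map with $\alpha(e_j)=f_j$. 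It sends the $\bZ$-span of the $e_j$ onto the $\bZ$-span of the $f_j$, so $\alpha\cdot\cH^\bullet(X)=\Lambda$.

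\emph{Factoring of central charges.} The map $\alpha$ induces an isomorphism of lattices $\cH^\bullet(X)\to\Lambda$. Consequently $Z$ factors through $\ch$ if and only if it factors through $\alpha\circ\ch$: one uses $Z=W\circ\ch$ and $Z=(W\circ\alpha^{-1})\circ(\alpha\circ\ch)$. Rational surjectivity of $\ch$ and of $\alpha\circ\ch$ are also equivalent, since $\alpha$ is a $\bQ$-linear isomorphism.

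\emph{Support property.} The map $\alpha_\bR:=\alpha\otimes\bR$ is a linear automorphism of the finite-dimensional space $V_\bR$. Take a norm $\norm{\cdot}$ witnessing the support property for $(\cH^\bullet(X),\ch)$. Then $\norm{w}':=\norm{\alpha_\bR^{-1}w}$ is a norm on $\Lambda_\bR=V_\bR$, and it satisfies $\norm{\alpha(\ch(E))}'=\norm{\ch(E)}$ for every $E$. The infimum in the support property is therefore literally unchanged, so it remains positive. The converse direction uses $\alpha^{-1}$ in the same way. Alternatively, all norms on a finite-dimensional real space are equivalent, so one may fix any norm and observe that $\norm{\alpha\,\cdot\,}$ and $\norm{\cdot}$ differ by bounded constants.

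The only mildly delicate step is the first one: confirming that $\cH^\bullet(X)$ really is a lattice of full rank, that is, finitely generated rather than merely a spanning subgroup. The paper already asserts that it is an integral structure, so I would take this as given. Everything else is formal.
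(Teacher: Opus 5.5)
Your proposal is correct and takes essentially the same route as the paper: $\alpha$ exists because $\cH^\bullet(X)$ and $\Lambda$ are both full-rank lattices in $\H^\bullet_{\rm{alg}}(X)_{\bQ}$, and the factorization of $Z$ transfers because $\alpha$ restricts to an isomorphism $\cH^\bullet(X)\to\Lambda$. Your transported norm $\norm{w}'=\norm{\alpha_{\bR}^{-1}w}$ is exactly the verification the paper leaves as an exercise, and you are right to read ``integral structure'' as a finitely generated full-rank lattice, since the conclusion $\alpha\cdot\cH^\bullet(X)=\Lambda$ forces this.
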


\begin{proof}
    Existence of $\alpha$ follows from $\cH^\bullet(X)\otimes \bQ = \H^\bullet_{\rm{alg}}(X)_{\bQ} = \Lambda\otimes \bQ$. First, $Z$ factors through $\ch:\rm{K}_0(X) \to \cH^\bullet(X)$ if and only if it is in the image of $\ch^*:\Hom_{\bZ}(\cH^\bullet(X),\bC) \to \Hom_{\bZ}(\rm{K}_0(X),\bC)$. Thus, $Z$ also factors through $\alpha\circ \ch$ since it is in the image of $(\alpha\circ \ch)^*$; this is immediate from the fact that $\alpha^*: \Hom_{\bZ}(\Lambda,\bC) \to \Hom_{\bZ}(\cH^\bullet(X),\bC)$ is an isomorphism. It is now an exercise to verify that the support property condition for $\cH^\bullet(X)$ transfers to $\Lambda$.
\end{proof}

\begin{defn}
\label{D:fullsupport}
    Let $X$ be a smooth and projective variety over $\bC$. We say that a pre-stability condition $\sigma = (Z,\cP)$ on $\DCoh(X)$ is a \emph{stability condition with full support} if it supported on some (equivalently any) pair $(\Lambda,\alpha \circ \ch)$ as in Lemma~\ref{L:changeoflattice}.
\end{defn}

\begin{ex}
\label{E:twistchern}
    Typical examples appearing in the literature are of the form $\alpha(-) = \gamma\cup(-)$ for $\gamma \in \H^\bullet_{\rm{alg}}(X)_\bQ$. For example, taking $\gamma = \sqrt{\td(X)}$ gives $\Lambda = \sqrt{\td(X)}\cH^\bullet(X)$ the \emph{Mukai lattice}. We denote this by $\cH^\bullet_M(X)$ and write $v_X(-) = \ch(-)\cup \sqrt{\td(X)}$ for the \emph{Mukai vector}. The Mukai lattice is functorial under pushforwards: if $f:X\to Y$ is a morphism of smooth projective varieties, then by the Grothendieck--Riemann--Roch theorem we have a commutative diagram:
        \[
            \begin{tikzcd}
                \rm{K}_0(X) \arrow[r,"f_*"] \arrow[d,"v_X",swap]& \rm{K}_0(Y)\arrow[d,"v_Y"]\\
                \H^\bullet(X,\bQ) \arrow[r,"f_*"]& \H^\bullet(Y,\bQ).
            \end{tikzcd}
        \]
        Thus, the Gysin map $f_*$ restricts to a homomorphism $\cH^\bullet_M(X)\to \cH^\bullet_M(Y)$. This property will be used in Section~\ref{S:constructing}.
\end{ex}

\begin{rem}
\label{R:changeoflattice}

%\FR{here could be a place to say we drop the $(-)$ from the notation in $\Stab$ (and similarly also the subscript in the Mukai vector)}
When convenient, we will drop $X$ from the notation and write $\Stab_{(\cH_M^\bullet,v)}(X)$ and $\Stab_{(\cH^\bullet,\ch)}(X)$ to denote stability conditions supported on $(\cH_M^\bullet(X),v_X)$ and $(\cH^\bullet(X),\ch)$. 

By Lemma~\ref{L:changeoflattice} and Lemma~\ref{E:twistchern}, there is an \emph{equality} of sets $\Stab_{(\cH_M^\bullet,v)}(X) = \Stab_{(\cH^\bullet,\ch)}(X)$. However, we may still write $\Stab_{(\cH^\bullet_M,v)}(X)$ to emphasize the choice of Mukai homomorphism.
\end{rem}

\begin{defn}{\cite{Lirealreduction}}
    Given $\sigma,\omega\in \Stab_{(\Lambda, v)}(\mathcal{D})$, we write \vspace{-2mm}
    \begin{enumerate}
        \item $\sigma \prec \omega$ if $\cP_{\sigma}(\theta)\subset \cP_{\omega}(<\theta)$ for all $\theta \in \bR$, and \vspace{-2mm}
        \item $\sigma \preceq \omega$ if $\cP_{\sigma}(\theta)\subset \cP_{\omega}(\leq \theta)$ for all $\theta\in \bR$. 
    \end{enumerate}
\end{defn}

A key tool used in \cite{ChunyiLiRemark} to construct stability conditions on projective varieties is the procedure of inducing a stability condition along an exact functor, as introduced in \cite{MMSinducing}. We consider only the (derived) functors $f^*:\DCoh(X) \to \DCoh(Y)$ and $f_*:\DCoh(Y) \to \DCoh(X)$ where $f:Y\to X$ is a morphism of smooth projective varieties.

\begin{comment}
    
Consider $\sigma = (Z,\cP) \in \Stab_{(\cH^\bullet,\ch)}(Y)$. We define $f_{\#}(\sigma) = (f_{\#}Z,f_{\#}\cP)$, where $f_{\#}Z = Z\circ f^* \in \Hom(\rm{K}_0(X),\bf{C})$ and $f_{\#}\cP$ is a collection of full additive sub\-categories $f_{\#}\cP(\theta)\subset \DCoh(X)$, where 
\[
    f_{\#}\cP(\theta) := \left\{E\in \DCoh(X):f^*E \in \cP(\theta) \setminus \{0\}\right\} \cup \{0\}
\]
for each $\theta \in\bf{R}$. Note that for any non-zero $E\in f_{\#}\cP(\theta)$ we have $f_{\#}Z(E) = Z(f^*E) \in \bf{R}_{>0}\cdot \exp(i\pi \theta).$

\begin{prop}
\label{P:pushforward}
    If $f_{\#}\sigma$ is a pre-stability condition and $f^*:\cH^\bullet(X) \to \cH^\bullet(Y)$ is injective, then $f_{\#}\sigma$ has full support. 
\end{prop}

\begin{proof}
    First, note that naturality of the Chern character, i.e. $f^*\circ \ch_X = \ch_Y\circ f^*$, implies that $f_{\#}Z$ factors through $\ch_X$. For the support property, equip $\cH^\bullet(Y)_{\bR}$ with a norm $\lVert\:\cdot\:\rVert_Y$ and let $\lVert \:\cdot \:\rVert_X$ denote the norm on $\cH^\bullet(X)_{\bR}$ induced along $f^*$. Then,
    \[
        \inf_{0\ne E\in f_{\#}\cP(\theta)} \frac{\lvert f_{\#}Z(E)\rvert}{\lVert \ch_X(E)\rVert} = \inf_{0\ne E \in f_{\#}\cP(\theta)} \frac{\lvert Z(f^*(E))\rvert}{\lVert \ch_Y(f^*E)\rVert} > 0
    \]
    where we used the fact that $E\in f_{\#}\cP(\theta)$ if $f^*(E) \in \cP(\theta) \setminus \{0\}$ and injectivity of $f^*$ so that $\ch_Y(f^*E) = f^*\ch_X(E)$ is non-zero. Recall also that $\ch_X(E)\ne 0$ by stability of $E$.
\end{proof}

\end{comment}

Let $\Lambda$ be a lattice equipped with a surjective map
\[
    v\colon \K_0(Y) \xrightarrow{\ch_Y} \cH^\bullet(Y) \xrightarrow{\alpha} \Lambda.
\] 
Consider $\sigma = (Z,\cP) \in \Stab_{(\Lambda,v)}(Y)$. We define $f_{\#}(\sigma) = (f_{\#}Z,f_{\#}\cP)$, where $f_{\#}Z = Z\circ f^* \in \Hom(\rm{K}_0(X),\bf{C})$ and $f_{\#}\cP$ is a collection of full additive sub\-categories $f_{\#}\cP(\theta)\subset \DCoh(X)$, where 
\[
    f_{\#}\cP(\theta) := \left\{E\in \DCoh(X):f^*E \in \cP(\theta) \setminus \{0\}\right\} \cup \{0\}
\]
for each $\theta \in\bf{R}$. Note that for any non-zero $E\in f_{\#}\cP(\theta)$ we have $f_{\#}Z(E) = Z(f^*E) \in \bf{R}_{>0}\cdot \exp(i\pi \theta).$

\begin{prop}
\label{P:pushforward}
    If $f_{\#}\sigma$ is a pre-stability condition and $\alpha\circ f^*:\cH^\bullet(X) \to \Lambda$ is injective, then $f_{\#}\sigma$ has full support. 
\end{prop}

\begin{proof}
    First, note that naturality of the Chern character, i.e. $f^*\circ \ch_X = \ch_Y\circ f^*$, implies that $f_{\#}Z$ factors through $\ch_X$. For the support property, equip $\Lambda_{\bR}$ with a norm $\lVert\:\cdot\:\rVert_Y$ and let $\lVert \:\cdot \:\rVert_X$ denote the norm on $\cH^\bullet(X)_{\bR}$ induced along $\alpha\circ f^*$. Then,
    \[
        \inf_{0\ne E\in f_{\#}\cP(\theta)} \frac{\lvert f_{\#}Z(E)\rvert}{\lVert \ch_X(E)\rVert_{X}} 
        =
        \inf_{0\ne E \in f_{\#}\cP(\theta)} \frac{\lvert Z(f^*(E))\rvert}{\lVert v(f^*E)\rVert_Y} > 0
    \]
    where we used the fact that $E\in f_{\#}\cP(\theta)$ if $f^*(E) \in \cP(\theta) \setminus \{0\}$ and injectivity of $\alpha \circ f^*$ so that $v(f^*E) = (\alpha\circ f^*)\ch_X(E)$ is non-zero. Recall also that $\ch_X(E)\ne 0$ by stability of $E$.
\end{proof}

Next, we consider the dual operation. Given $\tau = (W,\cQ) \in \Stab_{(\cH^\bullet_M,\ch)}(X)$, we define $f^{\#}\tau = (f^{\#}Z,f^{\#}\cQ)$ where $f^{\#}Z = Z\circ f_* \in \Hom(\rm{K}_0(Y),\bC)$ and $f^{\#}\cQ$ is a collection of full additive sub\-categories $f^{\#}\cQ(\theta)\subset \DCoh(Y)$, where 
\[
    f^{\#}\cQ(\theta) := \left\{E\in \DCoh(Y):f_*E \in \cQ(\theta)\setminus \{0\}\right\} \cup \{0\},
\] 
for each $\theta \in\bf{R}$. Note again that for any non-zero $E\in f^{\#}\cQ(\theta)$ we have $f^{\#}W(E) = W(f_*E) \in \bf{R}_{>0}\cdot \exp(i\pi \theta).$

\begin{prop}
\label{P:pullback}
    Given $\tau \in \Stab_{(\cH^\bullet_M,v)}(X)$, if $f^{\#}\tau$ is a pre-stability condition and $f_*:\cH^\bullet_M(Y) \to \cH^\bullet_M(X)$ is injective, then $f^{\#}\tau$ has full support.
\end{prop}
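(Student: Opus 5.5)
The argument mirrors the proof of Proposition~\ref{P:pushforward}, with Grothendieck--Riemann--Roch (Example~\ref{E:twistchern}) playing the role that naturality of the Chern character played there. Write $\tau = (W,\cQ)$, so the central charge of $f^{\#}\tau$ is $W\circ f_*$.

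\emph{Factorization of the central charge.} Since $\tau$ is supported on $(\cH^\bullet_M(X),v_X)$, we can write $W = \overline{W}\circ v_X$ for some homomorphism $\overline{W}\colon\cH^\bullet_M(X)\to\bC$. The commutative square of Example~\ref{E:twistchern} gives $v_X\circ f_* = f_*\circ v_Y$ on $\K_0(Y)$, where on the right $f_*$ is the Gysin map, which sends $\cH^\bullet_M(Y)$ into $\cH^\bullet_M(X)$. Therefore
\[
W\circ f_* = \overline{W}\circ f_*\circ v_Y ,
\]
so the central charge factors through $v_Y$. By Lemma~\ref{L:changeoflattice} and Remark~\ref{R:changeoflattice}, it is equivalent to work with $(\cH^\bullet_M(Y),v_Y)$ or with $(\cH^\bullet(Y),\ch)$.

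\emph{Support property.} Fix a norm $\lVert\cdot\rVert_X$ on $\cH^\bullet_M(X)_\bR$. Because $f_*\colon\cH^\bullet_M(Y)\to\cH^\bullet_M(X)$ is injective, and injectivity of a map of lattices persists after tensoring with $\bR$, the formula $\lVert u\rVert_Y := \lVert f_*u\rVert_X$ defines a genuine norm on $\cH^\bullet_M(Y)_\bR$. Take a nonzero $E\in f^{\#}\cQ(\theta)$. By definition $f_*E\in\cQ(\theta)$ is nonzero, and by Grothendieck--Riemann--Roch $v_X(f_*E) = f_*v_Y(E)$. Hence
\[
\frac{\lvert W(f_*E)\rvert}{\lVert v_Y(E)\rVert_Y} = \frac{\lvert W(f_*E)\rvert}{\lVert v_X(f_*E)\rVert_X} \ \ge\ \inf_{0\neq F\in\cQ(\theta)}\frac{\lvert W(F)\rvert}{\lVert v_X(F)\rVert_X} > 0 .
\]
The final inequality is the support property of $\tau$; the constant does not depend on $\theta$ (or is uniform, in whatever sense Definition~\ref{D:stability} intends). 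Finally, $v_Y(E)\neq 0$: the support property of $\tau$ forces $W(f_*E)\neq 0$, so $v_X(f_*E)\neq 0$, and therefore $v_Y(E)\neq 0$. Thus $f^{\#}\tau$ is supported on $(\cH^\bullet_M(Y),v_Y)$, which means it has full support.

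\emph{Main obstacle.} The only substantive point is checking that the lattices match. The image of $\cH^\bullet_M(Y)$ under the Gysin map lies in $\cH^\bullet_M(X)$, which Example~\ref{E:twistchern} guarantees. Injectivity of $f_*$ is what lets us pull the norm back to an honest norm. Without it, $\lVert\cdot\rVert_Y$ would only be a seminorm and the support inequality could fail, and this is exactly why injectivity appears as a hypothesis.
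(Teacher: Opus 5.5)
Your argument is the paper's proof written out in full: you factor $W\circ f_*$ through $v_Y$ using the pushforward compatibility of Mukai vectors from Example~\ref{E:twistchern}, pull the norm back along the injective map, and identify the ratio for $E$ with the ratio for $f_*E\in\cQ(\theta)$.

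One caveat, inherited from the paper rather than introduced by you: Grothendieck--Riemann--Roch actually gives $v_X(f_*E)=f_*\bigl(v_Y(E)\cdot\sqrt{\td(Y)}\,f^*\sqrt{\td(X)}^{-1}\bigr)$, not $f_*v_Y(E)$. For a line $L\subset\bP^2$ and $E=\ko_L$, these are $L+\tfrac14\,\mathrm{pt}$ and $L+\tfrac12\,\mathrm{pt}$ respectively. So $f_*\colon\cH^\bullet_M(Y)\to\cH^\bullet_M(X)$ must be read as this twisted Gysin map, i.e.\ the map induced by $f_*$ on $\K_0$, and with that reading your proof goes through verbatim.
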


\begin{proof}
    The proof is analogous to that of Proposition~\ref{P:pushforward}, except that in this case we use the naturality of the Mukai vector under pushforwards: $f_*\circ v_X = v_Y\circ f_* $.
\end{proof}

\section{Stability conditions on \texorpdfstring{$\bf{P}^\chi$}{Pchi}}

Consider a partition $\chi$ of $n$, i.e. a tuple $(n_1,\ldots,n_k)$ of positive integers such that $n = n_1+ \cdots+n_k$. We let $\bf{P}^\chi := \bf{P}^{n_1}\times \cdots \times \bf{P}^{n_k}$ and $\bf{P}^{1,n}:= (\bf{P}^1)^n$. Following the techniques of \cite{ChunyiLiRemark}, we construct stability conditions on $\bf{P}^\chi$. Associated to $\chi$ is the reducible root system of type $B_\chi = B_{n_1} \sqcup \cdots \sqcup B_{n_k}$. For any $n\in \bN$, $\mathsf{S}_n$ denotes the corresponding symmetric group. We write $\mathsf{S}_\chi = \mathsf{S}_{n_1}\times \cdots \times \mathsf{S}_{n_k}$ and denote the associated Weyl group by 
\[
    \mathsf{W}_{B_\chi} := (\bf{Z}/2\bf{Z})^n \rtimes \mathsf{S}_\chi. 
\]
The main observation of this section is that the results of \cite{ChunyiLiRemark} can be generalized from $B_n$ to $B_\chi$. First of all, fix an elliptic curve $E$ and consider the diagram 
\[
    \begin{tikzcd}
        E^n \arrow[r,"h"]\arrow[dr,"\pi",swap]& \bf{P}^{1,n}\arrow[d,"f"]\\
        &\bf{P}^\chi
    \end{tikzcd}
\]
where the maps are as follows. The elliptic curve $E$ has an involution $x\mapsto -x$ which defines a $(\bf{Z}/2\bf{Z})$-action. The map $h$ is the quotient map of $E^n$ by $(\bf{Z}/2\bf{Z})^n$ acting by the product of the involution action. The morphism $f$ is the quotient of $\bf{P}^{1,n}$ by $\mathsf{S}_\chi$ and $\pi := f\circ h$. 

Following \cite{Lietal}, for each $i=1,\dots, n$  fix a closed point $q_i\in E$ and define a divisor
\begin{equation}
    H_i:=E\times \cdots\times \{q_i\}\times \cdots \times E, \text{ for each } i=1,\dots, n\;\;\text{ and }\;\; H:=\sum_i H_i.\footnote{We use the same notation for $H_i,H$, the associated line bundles, and their first Chern classes.}
\end{equation}
In \emph{loc. cit.}, the authors construct geometric stability conditions on $E^n$ supported on $v: \rm{K}_0(E^n)\twoheadrightarrow \bZ^{2^n}\eqqcolon \Lambda$ defined by
\begin{equation*}
    [F]\mapsto (H_{j_1}\cdots H_{j_k}\ch_{n-k}(F))_{1\leq j_1<\cdots< j_k\leq n}.
\end{equation*}
Observe that $v$ factors through $\ch_{E^n}$: we write $v = \alpha \circ \ch_{E^n}$. The authors then prove the following result.

\begin{thm}[\cite{ChunyiLiRemark}*{Prop. 5.4} \& \cite{Lietal}*{Thm 4.5}]
\label{thm_stab_on_En}
    Fix $N\in \bZ_{\geq 1}$.
    %and a line bundle $\cL$ on $E^n$ which is numerically equivalent to $H$.
    For any $(a,b)\in \bR_{>0}\times\bR$ with $a,\lvert b\rvert \gg 1$ there exists a geometric stability condition 
    \begin{equation}\label{eq_stabN_ell}
    \begin{split}
        \sigma^{a,b} & = (\cP^{a,b},Z^{a,b})\in \Stab_{(\Lambda,v)}(E^n) \text{ with }\\
        Z^{a,b}(-) & = -\int_{E^n}e^{-(b+\sqrt{-1}a)H}\ch(-) 
    \end{split}
    \end{equation}
    %\FR{I changed $\cH^\bullet$ to $\cH^\bullet(E^n)$ in the above}
    such that \vspace{-2mm}
    \begin{enumerate} 
        \item $\phi_{\sigma^{a,b}}(\ko_p)=1\text{ for all }p\in E^n$ and \vspace{-2mm}
        \item $\sigma^{a,b}\otimes \ko(kH) \prec \sigma^{a,b}[1] \;\;\text{ for all } \;1\leq k\leq N.$\vspace{-2mm}
    \end{enumerate}  
\end{thm}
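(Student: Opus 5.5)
This statement is quoted from \cite{ChunyiLiRemark}*{Prop. 5.4} and \cite{Lietal}*{Thm 4.5}. The plan below is how I would reconstruct its proof, not something derived from earlier results in this paper. It has two parts: an existence part (geometric stability conditions on $E^n$ with central charge $Z^{a,b}$ and support on $(\Lambda,v)$), and a comparison part (properties (1) and (2)).

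For existence, I would follow \cite{Lietal} and build $\sigma^{a,b}$ by iterated tilting. Start from slope stability with respect to $H$ and tilt, using the Bogomolov--Gieseker type inequalities available on abelian varieties. On $E^n$ these inequalities are very strong: every line bundle $L$ has $\ch(L)=e^{c_1(L)}$, and semihomogeneous bundles saturate the inequalities. This lets one verify the generalized Bogomolov--Gieseker inequalities for all the intermediate tilts, by induction on $n$ via restriction to the divisors $H_i\cong E^{n-1}$.

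The support property for the lattice $\Lambda$ is built so that the relevant quadratic forms only involve the products $H_{j_1}\cdots H_{j_k}\ch_{n-k}$. These are exactly the coordinates of $v$. The quadratic forms produced by the Bogomolov-type inequalities, combined with the Bayer--Macr\`i--Stellari quadratic-form criterion, are then negative definite on $\ker Z^{a,b}$ and nonnegative on semistable objects. This gives the support property with respect to $(\Lambda,v)$, and the deformation theorem then shows the construction works on an open region, in particular for all $a,\lvert b\rvert\gg 1$.

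Property (1) is the usual geometric statement: skyscrapers $\ko_p$ are simple objects of the heart, since they admit no nontrivial subobjects after tilting. We have $Z^{a,b}(\ko_p)=-1$, so $\ko_p$ is stable of phase $1$.

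Property (2) is the main obstacle. It requires that every $\sigma^{a,b}$-semistable object $F$ of phase $\theta$ has $F\otimes\ko(kH)$ of phase $<\theta+1$ with respect to $\sigma^{a,b}$. Tensoring by $\ko(kH)$ carries $\sigma^{a,b}$ to the stability condition with parameter $b-k$ up to the $\widetilde{\GL}^+_2$-action; this follows from $e^{-(b+ia)H}e^{kH}=e^{-((b-k)+ia)H}$. So it suffices to compare $\sigma^{a,b}$ and $\sigma^{a,b-k}$ for $1\le k\le N$.

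As $a\to\infty$ with $b$ fixed relative to $a$ in a suitable way, both conditions approach the large volume limit, where phases are governed by the leading term $a^n\operatorname{rk}$ twisted by $b$. I would first prove the claim at the level of central charges. For $a\gg N$, the argument of $Z^{a,b-k}(F)$ differs from that of $Z^{a,b}(F)$ by a small shift of size roughly $k/a$, of a fixed sign. Then I would use the support property uniformly along the path $b\mapsto b-k$ to control how the HN filtrations change, following Li's lemma that bounded central-charge variation implies $\prec$ up to the shift $[1]$, from \cite{Lirealreduction}. Making this uniform in $N$ is exactly why $a,\lvert b\rvert$ must be taken large depending on $N$.
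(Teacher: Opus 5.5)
The paper gives no proof of this statement. It imports it from \cite{ChunyiLiRemark}*{Prop. 5.4} and \cite{Lietal}*{Thm 4.5} and only uses the conclusions. As a reconstruction, your sketch has a genuine gap in the existence part, and part (2) is missing two inputs it actually needs.

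\emph{Existence and support.} Existence of $\sigma^{a,b}$ with the $(\Lambda,v)$-support property is the whole content of \cite{Lietal}*{Thm 4.5}, and your sketch does not supply it.

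\begin{itemize}
\item Your claim that generalized Bogomolov--Gieseker inequalities hold for all intermediate tilts on $E^n$ ``by induction on $n$ via restriction to $H_i\cong E^{n-1}$'' is not an argument. Restriction to a divisor does not preserve (tilt-)semistability, so inequalities on $E^{n-1}$ do not by themselves bound semistable objects on $E^n$. Already for abelian threefolds the analogous inequality required substantial extra input (Maciocia--Piyaratne, Bayer--Macr\`{\i}--Stellari). In dimension $\geq 4$ this is exactly the hard part, and you give no mechanism for it.
\item Your support step does not match the lattice. $Z^{a,b}$ depends on $v(F)$ only through the sums $H^k\ch_{n-k}(F)$. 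So $\ker Z^{a,b}\subset\Lambda_{\bR}$ contains directions in which only the $H_1\ch_{n-1}$- and $H_2\ch_{n-1}$-coordinates move, with opposite signs. Any quadratic form built from $H$ alone vanishes identically there, so it cannot be negative definite on $\ker Z^{a,b}$. You need forms that separate the individual $H_J$-coordinates. For $n=2$, for instance, $(H_1\ch_1)(H_2\ch_1)-\ch_0\ch_2$, which combines Bogomolov--Gieseker with the Hodge index theorem, is negative definite on $\ker Z^{a,b}$.
\item Deforming from one point yields an open set, not the whole range $a,\lvert b\rvert\gg1$.
\end{itemize}

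\emph{Property (2).} Two inputs are missing, though your mechanism is sound once they are supplied.

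\begin{itemize}
\item The identity of exponentials only matches central charges. With the convention $Z_{\sigma\otimes L}=Z_\sigma(-\otimes L^{-1})$ one gets $Z^{a,b+k}$; the sign is a convention. To conclude $\sigma^{a,b}\otimes\ko(kH)=\sigma^{a,b+k}$ as stability conditions, you need uniqueness. Tensoring preserves $(\Lambda,v)$ and $\phi(\ko_p)=1$. Then \cite{FLZ}*{Thm. 1.1} and the injectivity of $(Z,\cP)\mapsto(Z,\phi_\sigma(\ko_p))$ from \cite{Lietal}*{Thm 1.1} identify the two, exactly as in the proof of Corollary~\ref{cor_from_En_to_P1n}.
\item Your statement that ``the argument moves by roughly $k/a$'' is precisely the estimate $\lVert Z^{a,b+t}-Z^{a,b}\rVert_{\sigma^{a,b}}=O(t/a)$. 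This needs a support-property constant for $\sigma^{a,b}$ that is uniform in $a$ once the $J$-th coordinate of $v$ is rescaled by $a^{\lvert J\rvert}$. That uniformity must be extracted from the explicit quadratic forms of \cite{Lietal}; it is not automatic. Your ``fixed sign'' claim is likewise unjustified, though you do not need it.
\end{itemize}

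Granting these two inputs, Bridgeland's deformation estimate along $t\in[0,k]$ gives $d(\cP^{a,b},\cP^{a,b+k})<1$ for $a\gg N$. Hence $\sigma^{a,b}\otimes\ko(kH)\prec\sigma^{a,b}[1]$. This argument uses only $a\gg N$; $\lvert b\rvert$ plays no role, since the family is periodic under $-\otimes\ko(H)$. So your closing explanation of the $\lvert b\rvert$ hypothesis is not borne out by your own argument.
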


A key observation in \cite{ChunyiLiRemark} is that stability conditions like those of Theorem~\ref{thm_stab_on_En} can be used to induce stability conditions on $\bP^{1,n}$ with some remarkable properties.

\begin{prop}
    For any $(\bZ/2\bZ)^n$-invariant $\sigma\in \Stab_{(\Lambda,v)}(E^n)$, there is an induced stability condition $h_{\#}\sigma \in \Stab_{(\cH^\bullet,\ch)}(\bP^{1,n})$.
\label{P:pffromelliptic}
\end{prop}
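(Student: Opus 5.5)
Here $h:E^n\to\bP^{1,n}$ is a finite flat quotient by $G=(\bZ/2\bZ)^n$, and $h_{\#}\sigma$ is defined using $h^*$. The proof splits into two parts: first show that $h_{\#}\sigma$ is a pre-stability condition, then apply Proposition~\ref{P:pushforward} to obtain full support.

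\emph{Step 1: $h_{\#}\sigma$ is a pre-stability condition.} This is the standard inducing statement of \cite{MMSinducing}. It is used in \cite{ChunyiLiRemark} in the same way for the quotient $E^n\to(\bP^1)^n$, or for $E\to\bP^1$ together with products. The key input is that $h$ is finite and flat. Because $\sigma$ is $G$-invariant, the HN filtration of $h^*F$ is $G$-invariant: $G$ acts on $h^*F$ by $G$-equivariant structure isomorphisms, and HN filtrations are unique. Hence each step of the filtration carries a $G$-linearisation. We then pass through the equivalence $\DCoh(\bP^{1,n})\simeq \DCoh_G(E^n)^{\text{triv}}$, or more precisely use that $F$ is a direct summand of $(h_*h^*F)^G$. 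Applying invariant pushforward $(h_*(-))^G$ to the filtration gives a filtration of $F$. Its factors $F_i$ satisfy $h^*F_i\cong$ the $i$-th HN factor, since $h^*(h_*A)^G\cong A$ for $G$-equivariant $A$ supported on the free locus. At the ramification locus we must use the equivariant category correctly; this is the content of \cite{MMSinducing}*{Thm.~2.14}. The Hom-vanishing axiom follows from faithfulness of $h^*$: $F\to F'$ is a summand of $h^*F\to h^*F'$ via adjunction, since $\ko_{\bP^{1,n}}$ is a summand of $h_*\ko_{E^n}$. The shift axiom is automatic. We need $h_{\#}Z=Z\circ h^*$ to factor through $\K_0$, which is clear.

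\emph{Step 2: full support.} We apply Proposition~\ref{P:pushforward}. It remains to check that $\alpha\circ h^*:\cH^\bullet(\bP^{1,n})\to\Lambda$ is injective. Write $e_i$ for the pullback of the point class of the $i$-th $\bP^1$. The cohomology of $\bP^{1,n}$ is spanned by the monomials $e_I=\prod_{i\in I}e_i$ for $I\subset\{1,\dots,n\}$. We have $h^*e_i=2H_i$ numerically, since $E\to\bP^1$ has degree $2$ and pulls back a point to $2q_i$. Hence $h^*e_I=2^{|I|}H_I$.

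Now $\alpha$ records the numbers $H_J\cdot\ch_{n-|J|}$. On a class $x=\sum_I c_IH_I$ we get $H_J\cdot H_I=\delta$-type values: the product equals $1$ if $I$ is the complement of $J$, and $0$ otherwise when $|I|=n-|J|$, because $H_i^2=0$. So $\alpha$ is injective on the span of the $H_I$. Since $h^*$ multiplies $e_I$ by the nonzero scalar $2^{|I|}$, the map $\alpha\circ h^*$ is injective. Together with Step 1, Proposition~\ref{P:pushforward} then gives $h_{\#}\sigma\in\Stab_{(\cH^\bullet,\ch)}(\bP^{1,n})$.

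The main obstacle is Step 1, namely producing HN filtrations downstairs. I would first try to cite \cite{MMSinducing}*{Thm.~2.14} directly, since it covers a finite group acting with $\sigma$ $G$-invariant and the target identified with the equivariant category. Failing that, I would replace $\bP^{1,n}$ by the quotient stack $[E^n/G]$. I would then induce along the $G$-invariant functor and use that $h^*$ factors through $\DCoh([E^n/G])$, with $\DCoh(\bP^{1,n})$ embedded via the fully faithful pullback.
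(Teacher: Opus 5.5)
\textbf{Step 2 is fine; Step 1 has a genuine gap.} Your Step~2 is correct and is the paper's argument made explicit. The paper uses K\"unneth to see $h^*$ as a tensor power of the injective map $\cH^\bullet(\bP^1)\to\cH^\bullet(E)$. It then notes that the image lies in the span $V$ of the monomials $H_{j_1}\cdots H_{j_k}$, and that $\alpha$ is the duality pairing on $V$. Your formula $h^*e_I=2^{\lvert I\rvert}H_I$, together with $H_J\cdot H_I=\delta_{I,J^c}$ in complementary degrees, is the same computation.

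The problem is Step~1. The paper does not prove this step; it takes it from \cite{ChunyiLiRemark}*{Lem. 6.1}. Your sketch does not establish it.
\begin{itemize}
\item The $G$-invariance argument of \cite{MMSinducing} gives canonical $G$-linearisations on the HN filtration of $h^*F$. But this only produces HN filtrations in $\DCoh_G(E^n)\simeq\DCoh([E^n/G])$, i.e.\ a stability condition on the quotient stack, not on the coarse quotient $\bP^{1,n}$.
\item To descend, each equivariant HN factor $A_i$ must lie in the essential image of the fully faithful pullback $\DCoh(\bP^{1,n})\hookrightarrow\DCoh_G(E^n)$, i.e.\ $A_i\cong h^*(h_*A_i)^G$.
\item You justify this only on the free locus. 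Essentially every object meets the ramification divisors, and there the statement is false for general equivariant objects. For a point $p$ fixed by $G$ and a nontrivial character $\chi$, the object $A=\ko_p\otimes\chi$ has $(h_*A)^G=0$.
\item Neither fallback repairs this. The finite-group inducing theorem again lands in $\DCoh_G(E^n)$. A stability condition on an ambient category does not restrict to a fully faithfully embedded subcategory unless HN factors of its objects stay in it, which is exactly the missing claim.
\end{itemize}

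\textbf{How to close the gap.} Either cite \cite{ChunyiLiRemark}*{Lem. 6.1} directly, as the paper does, or supply a real descent argument. One such argument parallels the paper's Proposition~\ref{prop6.2}.
\begin{itemize}
\item By a criterion of the type \cite{ChunyiLiRemark}*{Prop. 3.4}, it suffices to show $h^*h_*A\in\cP_\sigma(\le\theta)$ for every $A\in\cP_\sigma(\theta)$.
\item Consider one double cover $E\to\bP^1$ with involution $\iota$ and ramification divisor $R$. Since $E\times_{\bP^1}E=\Delta\cup\Gamma_\iota$, the object $h^*h_*A$ is an extension of $A$ by $\iota^*A\otimes\ko(-R)$.
\item Taking products, $h^*h_*A$ has a filtration with factors $w^*A\otimes\cL_w^{-1}$ for $w\in G$, where the $\cL_w$ are effective line bundles supported on the ramification.
\item $G$-invariance puts $w^*A$ in $\cP_\sigma(\theta)$. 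To conclude $w^*A\otimes\cL_w^{-1}\in\cP_\sigma(\le\theta)$ you additionally need a positivity input $\sigma\preceq\sigma\otimes\cL_w$.
\end{itemize}
This positivity is the idea absent from your Step~1: invariance alone only gets you to the stack.
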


\begin{proof}
    See \cite{ChunyiLiRemark}*{Lem. 6.1} for the proof that $h_{\#} \sigma$ is a pre-stability condition.
    Our goal is then to apply Proposition~\ref{P:pushforward} to $h_{\#} \sigma$: it suffices to check that $\alpha\circ h^*:\cH^\bullet(\bP^{1,n}) \to \Lambda$ is injective. First, note that for the standard quotient map $E\to \bP^1$, the induced map $\cH^\bullet(\bP^1) \to \cH^\bullet(E)$ is injective since it sends $\ch(\cO_p) \mapsto 2\ch(\cO_p)$ and $\ch(\cO_{\bP^1}) \mapsto \ch(\cO_E)$. 

    Next, consider the K\"unneth decompositions $\H^\bullet(\bP^{1,n},\bQ) = \bigotimes_{i=1}^n \H^\bullet(\bP^1,\bQ)$ and $\H^\bullet(E^n,\bQ) = \bigotimes_{i=1}^n \H^\bullet(E,\bQ)$. Since $h$ is an $n$-fold product of $E\to \bP^1$, we conclude that $h^*$ is injective, being an $n$-fold tensor product of injective maps. Finally, observe that the image of $h^*$ lies in the subspace $V$ generated by $\{H_{j_1}\cdots H_{j_k}\}_{1\le j_1<\cdots<j_k\le n}$. Since $\alpha$ is precisely the duality pairing on $V$, it follows that $\alpha \circ h^*$ is injective and the result now follows from Proposition~\ref{P:pushforward}.
\end{proof}

    %Next, write K\"unneth decompositions for  $\bP^{1,n}$ and $E^n$: since $h$ is an $n$-fold product of the map $E\to \bP^1$, we conclude that $h^*$ is injective. Finally, observe that the image of $h^*$ lies in the span of $\{H_{j_1}\cdots H_{j_k}\}_{1\le j_1<\cdots<j_k\le n}$: the map $\alpha$ is precisely the duality pairing on this span, hence $\alpha\circ h^*$ is injective. The result now follows from Proposition~\ref{P:pushforward}.

\begin{cor}\label{cor_from_En_to_P1n}
    Given $\sigma^{a,b}\in \Stab_{(\Lambda,v)}(E^n)$ as in \eqref{eq_stabN_ell}, then $\sigma^{a,b}$ satisfies the assumption of Proposition~\ref{P:pffromelliptic}. The induced stability condition
    \begin{equation}
        h_{\#}\sigma^{a,b}\in \Stab_{(\cH^\bullet, \ch)}(\PP^{1,n})
    \end{equation}
    \begin{enumerate}
        \item is $\mathsf{S}_\chi$-invariant; and \vspace{-2mm}
        \item satisfies $h_{\#}\sigma^{a,b}\preceq (h_{\#}\sigma^{a,b}) \otimes \cL$ for any effective line bundle $\cL$ on $\bP^{1,n}$.\vspace{-2mm}
    \end{enumerate}
\end{cor}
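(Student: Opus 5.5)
First I will check that $\sigma^{a,b}$ is $(\bZ/2\bZ)^n$-invariant. Each generator of $(\bZ/2\bZ)^n$ acts on $E^n$ by the involution $x\mapsto -x$ in one factor. Call such an automorphism $g$. Since $g^*$ acts trivially on $\H^{\mathrm{even}}(E)$, it fixes the classes $H_i$, and hence $Z^{a,b}\circ g^* = Z^{a,b}$. Also $g^*$ preserves $v$, because $H_{j_1}\cdots H_{j_k}\ch_{n-k}(g^*F) = g^*(H_{j_1}\cdots H_{j_k}\ch_{n-k}(F))$, which has the same degree.

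To get invariance of the slicing, I will use the uniqueness built into the construction of \cite{Lietal}. There $\sigma^{a,b}$ is obtained by iterated tilting from slope/Bogomolov-type functions that depend only on $H$ and $\ch$. Such data are preserved by any automorphism fixing the numerical classes $H_i$, so $g^*\sigma^{a,b}=\sigma^{a,b}$. Alternatively, one can note that $\sigma^{a,b}$ is a geometric stability condition with a given central charge, characterized as in \cite{ChunyiLiRemark}*{Prop. 5.4}. This settles the hypothesis of Proposition~\ref{P:pffromelliptic}, which then produces $h_{\#}\sigma^{a,b}$.

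For (1), take $s\in\mathsf S_\chi\subset \mathsf S_n$, acting on $\bP^{1,n}$ and on $E^n$ by permuting factors. These actions satisfy $h\circ s = s\circ h$. The same argument as above applies: $s$ permutes the $H_i$, so it fixes $H=\sum H_i$ and the lattice $\Lambda$, and so $s^*\sigma^{a,b}=\sigma^{a,b}$. Then $E\in h_{\#}\cP(\theta)$ if and only if $h^*E\in\cP(\theta)$, if and only if $s^*h^*E = h^*s^*E\in\cP(\theta)$. This shows $s^*$ preserves $h_\#\cP$, and the central charge is handled in the same way.

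For (2), write $\cL=\ko(d_1,\dots,d_n)$ with $d_i\ge 0$, so that $h^*\cL\cong \bigotimes_i \ko_E(2d_i\cdot 0)$. Following \cite{ChunyiLiRemark}*{Lem. 6.1}, I will use the criterion that $\sigma\preceq\sigma\otimes\cL$ holds as soon as $\Hom(A, B)=0$ for all $A\in\cP(\theta)$ and all $B\in(\cP\otimes\cL)(<\theta)$; this is equivalent to $\Hom(A\otimes \cL,B)=0$ for $B\in \cP(<\theta)$. For the effective case, I will factor $\cL$ as a product of line bundles $\ko(D_j)$ with $D_j$ a single coordinate hyperplane class. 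It then suffices to handle one effective divisor $D$, using the triangle
\[
    A\otimes\cL(-D)\to A\otimes \cL\to A\otimes\cL|_D
\]
and a torsion-sheaf argument. The torsion sheaf $\ko_D$ has phase governed by $\phi(\ko_p)=1$, and since the stability is geometric, the cone lies in phases where it is harmless.

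The cleaner alternative is to pull back to $E^n$. There $\ko(kH)$ for $k\le N$ satisfies $\sigma\otimes\ko(kH)\prec\sigma[1]$, and tensoring by a section $\ko\to\ko(D)$ gives a morphism whose cone is supported on $D$. Combined with the Hom-vanishing from Theorem~\ref{thm_stab_on_En}(2), this yields $\cP(\theta)\otimes\ko(D)\subset\cP(\le\theta)$ upstairs. That statement then descends through $h^*$. I expect this step to be the main obstacle, because the divisors $h^*D$ are not multiples of $H$, so the relevant tensor powers need not be of the form $\ko(kH)$. I will first try to reduce to $\ko(kH)$ by adding further effective divisors, using transitivity of $\preceq$ and the monotonicity observed in \cite{ChunyiLiRemark}*{Lem. 6.1}.
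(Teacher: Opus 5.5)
There is a genuine gap, and it is in part (2).

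\textbf{Invariance and part (1).} Your second variant is the paper's argument. Every stability condition on $E^n$ is geometric \cite{FLZ}*{Thm. 1.1}, and $\sigma\mapsto (Z,\phi_\sigma(\cO_p))$ is injective on $\Stab_{(\Lambda,v)}(E^n)$ \cite{Lietal}*{Thm. 1.1}. Since $\mathsf{W}_{B_\chi}$ fixes $Z^{a,b}$ and sends skyscrapers to skyscrapers, it fixes $\sigma^{a,b}$. You should cite this injectivity explicitly rather than an unspecified ``characterization''. Your first variant, that the construction in \cite{Lietal} is intrinsic in $H$ and $\ch$, is asserted but not checked. Deducing (1) from $h\circ s=s\circ h$ is fine.

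\textbf{What part (2) actually requires.} The only nontrivial input is the single inequality $\sigma^{a,b}\preceq\sigma^{a,b}\otimes h^*\cL_i$ for $\cL_i=p_i^*\cO_{\bP^1}(1)$. The paper takes this from \cite{ChunyiLiRemark}*{Lem. 2.3}. It then descends it to $h_\#\sigma\preceq(h_\#\sigma)\otimes\cL_i$ using Lemmas 3.3 and 3.2 of \cite{ChunyiLiRemark}, and concatenates via Lem. 2.2(4). Your reduction to coordinate hyperplane classes matches that last step, but you never prove the inequality for a single $\cL_i$.

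\textbf{The torsion-sheaf route.} It needs $A\otimes\cL|_D\in\cP(\ge\theta)$ for every $A\in\cP(\theta)$. Geometricity only yields coarse bounds such as $\Coh(Y)\subseteq\cP(1-n,1]$, never a sharp lower bound $\ge\theta$. In fact no argument using only geometricity and effectivity of $D$ can work. Take a $(-2)$-curve $C$ on a K3 surface and a large-volume $\sigma_{\omega,\beta}$. Then $\cO_C(-1)$ and $\cO_C(-1)\otimes\cO(C)=\cO_C(-3)$ are both stable. The formula $Z(\cO_C(k))=-(k+1)+\beta\cdot C+i\,\omega\cdot C$ gives $\phi(\cO_C(-3))<\phi(\cO_C(-1))$, so $\sigma\not\preceq\sigma\otimes\cO(C)$. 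A correct proof must use the positivity of $h^*\cL_i$, which is base-point-free and pulled back from a curve factor.

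\textbf{The $E^n$ route.} It has three problems.
\begin{enumerate}
\item It aims at $\cP(\theta)\otimes\cO(D)\subset\cP(\le\theta)$, which is $\sigma\otimes\cO(D)\preceq\sigma$. That is the opposite of what you need. Your own criterion, $\Hom(A\otimes\cL,B)=0$ for $B\in\cP(<\theta)$, correctly says $\cP(\theta)\otimes\cL\subset\cP(\ge\theta)$. Note also that your intermediate criterion, with $B\in(\cP\otimes\cL)(<\theta)$, characterizes $\sigma\otimes\cL\preceq\sigma$, so it is not equivalent to the one you want.
\item The restriction-$N$ property, $\cP(\theta)\otimes\cO(kH)\subset\cP(<\theta+1)$, is an upper bound. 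It gives no lower bound on phases of twists.
\item Transitivity of $\preceq$ cannot pass from $\cO(kH)$ down to the smaller divisor $h^*D$. Even knowing $\sigma\preceq\sigma\otimes\cO(kH)$, which Theorem~\ref{thm_stab_on_En} does not provide, nothing follows about $\sigma\otimes\cO(D)$ for $0\le D\le kH$ unless you already have the inequality you are trying to prove.
\end{enumerate}
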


\begin{proof}
    To apply Proposition~\ref{P:pffromelliptic}, we must check that $\sigma^{a,b}$ is $(\bZ/2\bZ)^n$-invariant. As noted in the proof of \cite{ChunyiLiRemark}*{Thm. 6.3},
    the action of $\mathsf{W}_{B_\chi}$ fixes $Z^{a,b}$. By \cite{FLZ}*{Thm. 1.1}, all stability conditions on $E^n$ are geometric and by \cite{Lietal}*{Thm. 1.1} the map $\Stab_{(\Lambda,v)}(E^n) \to \Hom(\Lambda,\bC) \times \bR$ given by $(Z,\cP)\mapsto (Z,\phi_\sigma(\cO_p))$ is injective, where $p\in E$ is any closed point. 
    The group $\mathsf{W}_{B_\chi}$ preserves central charges and since it acts through automorphisms, it preserves $\phi_{\sigma^{a,b}}(\cO_p)$.  
    %Thus, the $\mathsf{W}_{B_\chi}$-action preserves $\phi_{\sigma^{a,b}}(\cO_p)$ 
    Thus,  $\sigma:=\sigma^{a,b}$ is $\mathsf{W}_{B_\chi}$-invariant and descends to $\bP^{1,n}.$

    Next, $\sigma \preceq \sigma \otimes h^*\cL_i$ by \cite{ChunyiLiRemark}*{Lem. 2.3}, where $\cL_i := p_i^*\cO_{\bP^1}(1)$ and $p_i:\PP^{1,n}\to \PP^1$ is the $i^{\rm{th}}$ projection. Therefore, 
    \begin{equation*}
        h_{\#}\sigma\preceq h_{\#}(\sigma\otimes h^*\cL_i)=(h_{\#} \sigma)\otimes \cL_i
    \end{equation*}
    where the ``$\preceq$'' is by \cite{ChunyiLiRemark}*{Lem 3.3} and the ``$=$'' is by \cite{ChunyiLiRemark}*{Lem. 3.2}. All effective line bundles on $\bP^{1,n}$ are of the form $\cL_{i_1}^{\otimes a_1}\otimes\cdots \otimes \cL_{i_p}^{\otimes a_p}$ for $1\le i_1< \cdots< i_p\le k$ and $a_1,\ldots,a_p\geq 0$, so by \cite{ChunyiLiRemark}*{Lem. 2.2(4)} we have $h_{\#}\sigma\preceq (h_{\#}\sigma)\otimes \cL$ for any effective line bundle $\cL$ on $\bP^{1,n}$.
\end{proof}

\begin{prop}\cite{ChunyiLiRemark}*{Prop. 6.2}\label{prop6.2}
    Let $\sigma \in \Stab_{(\cH^\bullet,\ch)}(\bf{P}^{1,n})$ be given such that \vspace{-2mm}
    \begin{enumerate}
        \item $\sigma$ is $\mathsf{S}_\chi$-invariant; and \vspace{-2mm}
        \item $\sigma \preceq \sigma \otimes \cL$ for every effective line bundle $\cL$ on $\bf{P}^{1,n}$.\vspace{-2mm}
    \end{enumerate}
    Then, $f_{\#}\sigma$ is a stability condition on $\bf{P}^\chi$ with full support. 
\end{prop}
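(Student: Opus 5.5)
The proof splits into two parts. First, $f_{\#}\sigma$ is a pre-stability condition; this is Li's argument, generalized from $\mathsf{S}_n$ to $\mathsf{S}_\chi$. Second, the support property with respect to the full lattice follows from Proposition~\ref{P:pushforward}.

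For the pre-stability part, I would follow \cite{ChunyiLiRemark}*{Prop. 6.2} essentially verbatim. The central charge $Z\circ f^*$ is clearly a homomorphism. Axioms (1) and (2) of a slicing for $f_{\#}\cP$ follow from full faithfulness of $f^*$ up to invariants: since $f$ is a finite flat quotient by $\mathsf{S}_\chi$, we have $f_*\ko_{\bP^{1,n}}\supset \ko_{\bP^\chi}$ as a direct summand. Hence $\Hom(E,F)$ is a direct summand of $\Hom(f^*E,f^*F)$, and vanishing transfers.

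The substantive point is the existence of Harder--Narasimhan filtrations for $f^*E$ that descend. Here I would use the two hypotheses as in Li's paper:
\begin{itemize}
\item $\mathsf{S}_\chi$-invariance of $\sigma$ makes the HN filtration of $f^*E$ canonically $\mathsf{S}_\chi$-equivariant, so it defines a filtration in the equivariant category $\DCoh_{\mathsf{S}_\chi}(\bP^{1,n})$.
\item The condition $\sigma\preceq\sigma\otimes\cL$ for effective $\cL$ is used to show that the equivariant HN factors actually lie in the image of $f^*$, i.e.\ the stabilizers act trivially on the relevant fibers.
\end{itemize}
Li does this via the description of $\DCoh(\bP^\chi)$ through $f^*$ and the relative Beilinson-type resolution $f_*\ko_{\bP^{1,n}}$ built from effective line bundles. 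The only new input compared with Li is that the symmetric group is replaced by the product $\mathsf{S}_\chi$ acting factorwise. Since $\bP^\chi=\prod \mathrm{Sym}^{n_i}\bP^1$ and everything is a product, I expect Li's argument to apply factor by factor. I expect this descent step to be the main obstacle: one must check that Li's argument uses only that the relevant line bundles $\cL$ are effective and $\mathsf{S}_\chi$-linearizable, which holds here since all the $\cL_i$ are permuted by $\mathsf{S}_\chi$.

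For the support property, once $f_{\#}\sigma$ is a pre-stability condition, I apply Proposition~\ref{P:pushforward} with $Y=\bP^{1,n}$, $\Lambda=\cH^\bullet(\bP^{1,n})$ and $\alpha=\mathrm{id}$. It remains to check that
\[
f^*\colon \cH^\bullet(\bP^\chi)\to\cH^\bullet(\bP^{1,n})
\]
is injective. This holds because $f$ is finite surjective of degree $d=\prod n_i!$, so the projection formula gives $f_*f^*=d\cdot\mathrm{id}$ on rational cohomology. Hence $f^*$ is injective on $\H^\bullet(\bP^\chi,\bQ)\supset\cH^\bullet(\bP^\chi)$, and Proposition~\ref{P:pushforward} then gives that $f_{\#}\sigma$ has full support.
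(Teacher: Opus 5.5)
\textbf{Verdict: there is a genuine gap.} The missing piece is the existence of Harder--Narasimhan filtrations for $f_{\#}\sigma$, which is the whole content of the proposition.

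\textbf{What is fine.} Your support-property argument works. Injectivity of $f^*$ on rational cohomology via $f_*f^*=(\prod_i n_i!)\cdot\mathrm{id}$ is as good as the paper's identification of $\H^\bullet(\bP^\chi,\bQ)$ with the $\mathsf{S}_\chi$-invariants. The Hom-vanishing axiom for $f_{\#}\cP$ does follow from faithfulness of $f^*$.

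\textbf{Where the gap is.} You defer the HN step to ``Li's argument'' and sketch a mechanism that is not the one that makes the proof work, and that you do not justify. Once some $n_i\ge 2$, the action of $\mathsf{S}_\chi$ is not free. Then $f^*$ identifies $\DCoh(\bP^\chi)$ only with a proper subcategory of $\DCoh_{\mathsf{S}_\chi}(\bP^{1,n})$, so equivariance of the HN filtration of $f^*E$ buys nothing by itself. You give no argument for how effectivity of line bundles would force the equivariant HN factors into the image of $f^*$; ``the stabilizers act trivially on the relevant fibers'' is not a criterion you can check for complexes as stated. Two further remarks in your sketch are off:
\begin{itemize}
\item $\mathsf{S}_\chi$-linearizability of the line bundles plays no role.
\item ``Apply Li's argument factor by factor'' cannot mean applying it to stability conditions, since $\sigma$ is not a product of stability conditions on the factors $\bP^{1,n_i}$.
\end{itemize}

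\textbf{The two missing ingredients.} Write $Y=\bP^{1,n}$ and $X=\bP^\chi$.
\begin{enumerate}
\item The criterion \cite{ChunyiLiRemark}*{Prop. 3.4}: $f_{\#}\sigma$ is a stability condition as soon as $f^*f_*E\in\cP_\sigma(\le\theta)$ for every $E\in\cP_\sigma(\theta)$.
\item A filtration of the kernel of $f^*f_*$. This is a filtration of $\ko_Z$ with $Z=Y\times_X Y\subset Y\times Y$, not a resolution of $f_*\ko_{\bP^{1,n}}$.
\end{enumerate}

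\textbf{How the argument closes.} The product structure enters exactly at the level of this kernel. With $Y_i=\bP^{1,n_i}$ and $X_i=\bP^{n_i}$, one has $Z=\prod_i Y_i\times_{X_i}Y_i$. Taking exterior products of Li's filtrations \cite{ChunyiLiRemark}*{Cor. 4.2} on each factor gives a filtration of $\ko_Z$ with subquotients $\ko_{\Gamma_w}\otimes p_2^*\cL_w^{-1}$, for $w\in\mathsf{S}_\chi$ and $\cL_w$ effective. Hence $f^*f_*E=p_{2*}(p_1^*E\otimes\ko_Z)$ is filtered by the objects $w^*E\otimes\cL_w^{-1}$. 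Invariance gives $w^*E\in\cP_\sigma(\theta)$. Hypothesis (2) then gives
\[
\phi^+_\sigma(w^*E\otimes\cL_w^{-1})=\phi^+_{\sigma\otimes\cL_w}(w^*E)\le\theta,
\]
which is exactly the criterion in (1). Without this computation your proposal does not show that $f_{\#}\sigma$ is even a pre-stability condition, so Proposition~\ref{P:pushforward} cannot yet be applied.
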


\begin{proof}
    We consider $Y_i:=\bP^{1,n_i}$ for all $i=1, \dots, k$, the quotient maps $f_i:Y_i\to \PP^{n_i}$, and their products $Y=\prod_i Y_i$ and $f:Y\to X:=\prod_i\PP^{n_i}$. Moreover, we let 
    \begin{equation*}
        Z:=Y\times_X Y=\prod_i Y_i\times_{X_i} Y_i \subset \prod_i Y_i\times Y_i = Y\times Y.
    \end{equation*}
We will use the notation $p_2^i: Y_i\times Y_i\to Y_i$ for the projection to the second factor, $p_2 = p_2^1\times \cdots \times p_2^k$, and $\Gamma_{w_i}:=
\{(w_iy,y): y\in Y_i\}\subset Y_i\times Y_i$ for $w_i\in \mathsf{S}_{n_i}$.
    
    By \cite{ChunyiLiRemark}*{Cor 4.2}, the structure sheaf $\ko_Z\in \Coh(Y\times Y)$ admits a filtration with subquotients
    \begin{equation}
        (\ko_{\Gamma_{w_1}}\otimes p_2^{1,*} \cl^{-1}_{w_1})\boxtimes\cdots \boxtimes (\ko_{\Gamma_{w_k}}\otimes p_2^{k,*} \cl^{-1}_{w_k})=\ko_{\Gamma_{(w_1,\dots,w_k)}}\otimes p_2^*(\cl_{w_1}^{-1}\boxtimes\cdots\boxtimes \cl_{w_k}^{-1})
    \end{equation}
    where  $\cl_{w_i}$ is an effective line bundle on $Y_i$. Therefore, for any object $E\in \DCoh(Y)$ the object $p_{2*}(p_1^*E \otimes \cO_Z)$ 
    has a filtration whose subquotients are of the form 
    \begin{equation}\label{eq_filtr}
        (w_1,\dots,w_k)^*E\otimes \cl^{-1}_{(w_1,\dots,w_k)},\;\;\;w=(w_1,\dots,w_k)\in \mathsf{S}_\chi.
    \end{equation}
    For $E\in \cP_{\sigma}(\theta)$, the $\mathsf{S}_\chi$-invariance of $\sigma$ implies that $w^*E \in \cP_\sigma(\theta)$ for any $w\in \mathsf{S}_\chi$. Since, by hypothesis, $\sigma\preceq \sigma \otimes\cl$ for any effective line bundle $\cl$ on $Y$, \cite{ChunyiLiRemark}*{Lem. 2.2} gives 
\begin{equation}
    \phi^+_{\sigma}(w^*E\otimes \cl^{-1}_w)=
    \phi^+_{\sigma\otimes \cl_w}(w^*E)\leq \phi_{\sigma}(w^*E)=\theta.
\end{equation}
From filtration \eqref{eq_filtr}
it follows that $f^*f_*E\in \cP_{\sigma}(\leq \theta)$ and thus by \cite{ChunyiLiRemark}*{Prop. 3.4}, the pair $f_{\#}\sigma$ is a stability condition on $X$.

For the full support property, note that $f^*:\H^\bullet(\bP^\chi,\bQ)\to \H^\bullet(\bP^{1,n},\bQ)$ is injective, since it induces an isomorphism $\H^\bullet(\bP^\chi,\bQ)\to \H^\bullet(\bP^{1,n},\bQ)^{\mathsf{S}_\chi}$. Thus, we conclude by Proposition~\ref{P:pushforward}.
\end{proof}

Henceforth, we write $\cO_{\bP^\chi}(1) = \ko_{\PP^\chi}(1,\dots,1).$ The following is a slight generalization of \cite{ChunyiLiRemark}*{Thm. 6.3}. The main modification in our treatment is the emphasis of the lattice used to define the support property.

\begin{thm}
\label{T:ellipticconditions}
    Fix $N\in \bZ_{\geq 1}$. Then, there exists $C_N \in \bR_{>0}$ such that for any $(a,b)\in \bR_{>0}\times\bR$ with $a,\lvert b\rvert \geq C_N$ the stability condition $\sigma^{a,b}\in \Stab_{(\cH^\bullet,\ch)}(E^n)$ descends to $\bP^\chi$. That is, $\sigma:=\pi_{\#}\sigma^{a,b}\in \Stab_{(\cH^\bullet, \ch)}(\bP^\chi)$. Furthermore, $\sigma$ is geometric and satisfies: \vspace{-2mm}
 \begin{enumerate}
     \item $\sigma\preceq \sigma\otimes\ko_{\PP^\chi}(1)$ (Bayer property), \vspace{-2mm}
     \item $\sigma\otimes \ko_{\PP^\chi}(N)\prec \sigma[1]$ (restriction-N property). 
 \end{enumerate}
\end{thm}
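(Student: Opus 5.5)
The plan is to assemble the statement from the results above, mirroring \cite{ChunyiLiRemark}*{Thm. 6.3}. First I fix $N$ and choose $C_N$ so that Theorem~\ref{thm_stab_on_En} produces $\sigma^{a,b}$ for all $a,\lvert b\rvert\ge C_N$. The bound must make the restriction-$N$ inequality hold for the twists by $\ko(kH)$ with $1\le k\le cN$, where $c$ is a constant comparing $\pi^*\ko_{\bP^\chi}(1)$ with $H$. On $E\to\bP^1$ the pullback of $\ko_{\bP^1}(1)$ is $\ko(2q)$, so $c=2$ should suffice, after choosing the points $q_i$ appropriately, e.g.\ $2$-torsion points. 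Corollary~\ref{cor_from_En_to_P1n} then gives $\tau:=h_{\#}\sigma^{a,b}\in\Stab_{(\cH^\bullet,\ch)}(\bP^{1,n})$. This $\tau$ is $\mathsf S_\chi$-invariant and satisfies $\tau\preceq\tau\otimes\cL$ for effective $\cL$. So Proposition~\ref{prop6.2} applies and $\sigma=f_{\#}\tau$ is a stability condition on $\bP^\chi$ with full support. One still has to check that $f_{\#}h_{\#}=\pi_{\#}$, i.e.\ $(f\circ h)^*=h^*f^*$ on slicings and central charges, which is immediate from the definitions.

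Geometricity comes from the fact that $\pi^*\ko_x$ is a sheaf supported on a finite fibre. I will show that for every closed point $x\in\bP^\chi$ it lies in $\cP_{\sigma^{a,b}}(1)$. The fibre $\pi^*\ko_x$ is a successive extension of skyscrapers $\ko_p$, $p\in E^n$, possibly with nonreduced structure at ramification points, but still filtered by skyscrapers. Each $\ko_p$ has phase $1$ by Theorem~\ref{thm_stab_on_En}(1), and $\cP(1)$ is extension-closed, so $\ko_x\in\pi_{\#}\cP(1)$. Hence all skyscrapers are stable of phase $1$. To get stability rather than semistability, I plan to use that a proper subobject of $\ko_x$ in the heart $\pi_{\#}\cP((0,1])$ would have central charge with imaginary part zero. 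I expect to follow the argument in \cite{ChunyiLiRemark} here, or to observe that Li's pulled-back condition is already known to be geometric.

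For (1), the Bayer property: I will write $\ko_{\bP^\chi}(1)$ and note $f^*\ko_{\bP^\chi}(1)=\cL:=\bigotimes_i\cL_i$, which is effective. Corollary~\ref{cor_from_En_to_P1n}(2) gives $\tau\preceq\tau\otimes\cL$. By the compatibility lemmas \cite{ChunyiLiRemark}*{Lem. 3.2, 3.3} we get
\[
f_{\#}\tau\preceq f_{\#}(\tau\otimes f^*\ko(1))=(f_{\#}\tau)\otimes\ko(1).
\]
For (2), the restriction-$N$ property: $\pi^*\ko_{\bP^\chi}(N)$ is numerically $2NH$, hence algebraically equivalent to a translate of $\ko(2NH)$. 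Translations act trivially on $\sigma^{a,b}$ by the injectivity result \cite{Lietal}*{Thm. 1.1}, since they preserve both $Z$ and the phase of $\ko_p$. So Theorem~\ref{thm_stab_on_En}(2), with $N$ replaced by $2N$, gives $\sigma^{a,b}\otimes\pi^*\ko(N)\prec\sigma^{a,b}[1]$, and pushing this through $\pi_{\#}$ with the same lemmas yields $\sigma\otimes\ko(N)\prec\sigma[1]$.

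I expect the main obstacle to be this last step. It requires identifying $\pi^*\ko(N)$ with $\ko(2NH)$ up to an autoequivalence fixing $\sigma^{a,b}$. It also requires checking that the strict relation $\prec$, and not only $\preceq$, is preserved by $\pi_{\#}$. For the latter I would argue directly: if $E\in\pi_{\#}\cP(\theta)$, then $\pi^*(E\otimes\ko(N))\in\cP_{\sigma^{a,b}}(<\theta+1)$. By the definition of $\pi_{\#}$ via HN filtrations pulled back, as in \cite{ChunyiLiRemark}*{Lem. 3.3}, this forces $E\otimes\ko(N)\in\pi_{\#}\cP(<\theta+1)$.
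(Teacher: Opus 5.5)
Your proposal is correct and follows essentially the paper's proof: write $\pi_{\#}=f_{\#}h_{\#}$ and invoke Corollary~\ref{cor_from_En_to_P1n} and Proposition~\ref{prop6.2}; obtain the Bayer and restriction-$N$ properties from Theorem~\ref{thm_stab_on_En} (applied with $2N$) via $\pi^*\ko_{\PP^\chi}(1)\cong\ko(2H)$ and Li's compatibility lemmas; and get geometricity from $\pi^*\ko_x\in\cP_{\sigma^{a,b}}(1)$. Your remarks on taking the $q_i$ to be $2$-torsion (or using $\mathrm{Pic}^0$/translation invariance), and on why $\pi_{\#}$ preserves the strict relation $\prec$, make explicit points the paper leaves implicit. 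The stability-versus-semistability step is unnecessary, since the paper only checks $\ko_x\in\cP_\sigma(1)$ and that is all Theorem~\ref{T:maintheorem} uses; it would also not work as sketched, because a subobject whose central charge has zero imaginary part is just another phase-$1$ object, not a contradiction.
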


\begin{proof}
    We follow the proof of of \cite{ChunyiLiRemark}*{Cor. 6.4}; see also \cite{2026stab_cond_modulispaces}*{Lem. 4.8}. By definition, $\pi_{\#}\sigma^{a,b}=f_{\#}(h_{\#}\sigma^{a,b})$. Hence, by Corollary~\ref{cor_from_En_to_P1n} and Proposition~\ref{prop6.2} we have $\sigma:=\pi_{\#}\sigma^{a,b}\in\Stab_{(\cH^\bullet,\ch)}(\bP^\chi)$. We next check the Bayer and restriction-N properties. For $a,\lvert b\rvert\gg 1$ we have 
    \begin{equation*}
        \sigma^{a,b}\otimes \ko(kH) \prec \sigma^{a,b}[1] \text{ for all }1\leq k\leq 2N,
    \end{equation*}
    by Theorem~\ref{thm_stab_on_En}. Moreover, by  Corollary~\ref{cor_from_En_to_P1n}(2)
    \begin{equation*}
        \begin{split}
            \sigma=\pi_{\#}\sigma^{a,b}\preceq  \pi_{\#}(\sigma^{a,b}\otimes \ko(2H))=\pi_{\#}(\sigma^{a,b}\otimes \pi^*\ko_{\PP^\chi}(1))=\sigma\otimes \ko_{\PP^\chi}(1)\\
            \sigma\otimes \ko_{\PP^\chi}(N)=\pi_{\#}(\sigma^{a,b}\otimes \ko(2NH))\prec \pi_{\#}(\sigma^{a,b}[1])=\sigma[1].
        \end{split}
    \end{equation*}
    Finally, all skyscraper sheaves on $E^n$ lie in $\cP_{\sigma^{a,b}}(1)$.
For any skyscraper $\ko_p$ of $p\in \bP^\chi$, we have $\pi^*\ko_p\in \cP_{\sigma^{a,b}}(1)$. Thus, by definition of $\pi_{\#}$, we have $\ko_p\in \cP_{\sigma}(1)$.
\end{proof}

\section{Constructing stability conditions with full support}
\label{S:constructing}

In this section, we give a mild generalization of \cite{ChunyiLiRemark}*{Thm. 6.5} and apply it to $\bf{P}^\chi$. Recall that given a morphism $f:X\to Y$ of smooth projective varieties, there is an induced homomorphism $f_*:\cH^\bullet_M(X)\to \cH^\bullet_M(Y)$ --- see Example~\ref{E:twistchern}. We write $\cH^\bullet_M(f):= \im(f_*:\cH^\bullet_M(X)\to \cH^\bullet_M(Y))$. 

\begin{thm}[{\cite{ChunyiLiRemark}*{Thm. 6.5} \& \cite{2026stab_cond_modulispaces}*{Prop. 3.21}}]
\label{T:maintheorem}
    Consider a closed immersion $i:X\hookrightarrow Y$ of smooth projective varieties and a very ample line bundle $\ko(1)$ on $Y$. There exists $N_0$ such that the following holds.
    
    For any geometric  $\sigma\in \Stab_{(\cH^\bullet_M,v)}(Y)$ satisfying: \vspace{-2mm}
    \begin{enumerate}
        \item $\sigma\preceq \sigma\otimes\ko(1)$ (Bayer property) and\vspace{-2mm}
        \item $\sigma\otimes \ko(N) \preceq \sigma[1]$ %(restriction-N property) 
        for some $N\geq N_0$, \vspace{-2mm}
    \end{enumerate}  
 the pullback $i^{\#}\sigma$ defines a geometric stability condition on $\DCoh(X)$ supported on $(\cH_M^\bullet(i), i_*\circ v_X)$.
\end{thm}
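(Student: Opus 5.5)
The plan is to split the statement into two independent parts: first show that $i^{\#}\sigma$ is a geometric pre-stability condition, then check the support property with respect to $(\cH^\bullet_M(i), i_*\circ v_X)$. For the first part we follow \cite{ChunyiLiRemark}*{Thm. 6.5} essentially verbatim, since Li's argument never uses the lattice on which $\sigma$ is supported. It uses only the Bayer property, the restriction-$N$ inequality, and the geometricity of $\sigma$.

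Concretely, we would first resolve $i_*\cO_X$, or better the kernel $\cO_\Delta$ on $X\times X$ pushed into $Y$. We take a Koszul-type or Beilinson-type resolution of $i_*i^*(-)$ by twists $\cO(-k)$ with $0\le k\le N_0$, where $N_0$ depends only on $i$ and $\cO(1)$ (for instance via Castelnuovo--Mumford regularity of the ideal sheaf). This is how $N_0$ is chosen.

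Next, for $E\in\DCoh(X)$, we write $i_*E$ as an iterated extension of objects $F\otimes\cO(-k)$ with $F$ supported on $X$. The Bayer property gives $\sigma\otimes\cO(-k)\succeq\sigma\otimes\cO(-N)$ in the appropriate direction. Combined with $\sigma\otimes\cO(N)\preceq\sigma[1]$, this forces the $\sigma$-HN factors of $i_*E$ to be controlled by those of objects of $\DCoh(X)$. In particular, objects $E$ with $i_*E$ $\sigma$-semistable form a slicing on $\DCoh(X)$. This is exactly \cite{ChunyiLiRemark}*{Prop. 3.4}-type reasoning for $i^{\#}$, and we would cite it rather than redo it. Skyscrapers $\cO_p$, $p\in X$, satisfy $i_*\cO_p=\cO_{i(p)}$, which has phase $1$, so $i^{\#}\sigma$ is geometric.

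For the support property we mimic Proposition~\ref{P:pullback}. Naturality $i_*\circ v_X=v_Y\circ i_*$ (Example~\ref{E:twistchern}) shows that $i^{\#}Z=Z\circ i_*$ factors through $i_*\circ v_X:\K_0(X)\to\cH^\bullet_M(i)$. We equip $\cH^\bullet_M(Y)_\bR$ with the norm $\lVert\cdot\rVert$ witnessing the support property of $\sigma$ and restrict it to the subspace $\cH^\bullet_M(i)_\bR$. For $0\ne E\in i^{\#}\cP(\theta)$ we then get
\[
\frac{\lvert Z(i_*E)\rvert}{\lVert i_*v_X(E)\rVert}=\frac{\lvert Z(i_*E)\rvert}{\lVert v_Y(i_*E)\rVert},
\]
and the infimum is bounded below because $i_*E\in\cP(\theta)$ is nonzero. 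Unlike in Proposition~\ref{P:pullback}, no injectivity is needed here, precisely because we use the image lattice $\cH^\bullet_M(i)$ rather than $\cH^\bullet_M(X)$. This step is routine.

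The main obstacle is the first part: ensuring that the HN filtrations exist on $\DCoh(X)$, i.e. that $i^{\#}\cP$ is really a slicing. Li's argument for this relies on the restriction-$N$ property being strong enough relative to the length of the resolution. The only new point to check is that the hypothesis here is the non-strict $\preceq$, not $\prec$. We would first try to verify that Li's proof (or \cite{2026stab_cond_modulispaces}*{Prop. 3.21}) needs only the non-strict inequality. If it does not, we would strengthen to $\prec$ by enlarging $N_0$ by one and applying the Bayer property once more.
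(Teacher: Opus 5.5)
\paragraph{The gap is in the slicing step.}
Your support-property argument and your check that $i^{\#}\sigma$ is geometric are fine and match the paper. The problem is the resolution you posit: a finite resolution of $i_*\cO_X$ (or of the kernel of $i_*i^*$) by sums of twists $\cO(-k)$ with $0\le k\le N_0$. Such a resolution exists on $\bP^n$ by Hilbert's syzygy theorem. It does not exist on a general $Y$, and in particular not on $Y=\bP^\chi$ with $\cO(1)=\cO(1,\dots,1)$, which is the case the paper needs.

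For instance, take $\bP^1\times\bP^1$ and let $\ell_1,\ell_2$ be the $\K_0$-classes of the two rulings. Then $[\cO(-k,-k)]=1-k(\ell_1+\ell_2)+k^2\ell_1\ell_2$. The functional ``coefficient of $\ell_1$ plus coefficient of $\ell_1\ell_2$'' takes the value $k^2-k\in 2\bZ$ on each of these classes, but the value $1$ on $[\cO_x]=\ell_1\ell_2$. So no skyscraper admits such a resolution.

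One must therefore truncate after $n=\dim Y$ steps:
\[
0\to F\to\cO(-a_{n-1})^{\oplus m_{n-1}}\to\cdots\to\cO(-a_1)^{\oplus m_1}\to\cO_Y\to i_*\cO_X\to 0 .
\]
The kernel $F$ is not a sum of twists, so the Bayer and restriction-$N$ properties say nothing about the factor $E\otimes F[n]$. The missing idea is that geometricity of $\sigma$ is needed here, not only to make $i^{\#}\sigma$ geometric. With $\cO_y\in\cP_\sigma(1)$, the argument of \cite{BridgelandK3}*{Lem. 10.1} gives two inclusions: $\cP_\sigma(\theta)\subseteq\Coh(Y)[0,n-1]$ for $\theta\in(0,1]$, and $\Coh(Y)\subseteq\cP_\sigma(1-n,1]$. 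Hence $E\otimes F[n]\in\cP_\sigma(1,3n]$, so $\phi^-_\sigma(E\otimes F[n])>1\ge\theta$.

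\paragraph{You never state what must be verified.}
The object to filter is not $i_*E$ for $E\in\DCoh(X)$. It is $i_*i^*E=i_*\cO_X\otimes E$ for $E\in\cP_\sigma(\theta)$, where one may take $\theta\in(0,1]$ after shifting. By \cite{Polishchuk}*{Lem. 2.2.2}, it suffices to show $\phi^-_\sigma(i_*\cO_X\otimes E)\ge\theta$.

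This object has a filtration with factors $E$, $E\otimes\cO(-a_j)^{\oplus m_j}[j]$ and $E\otimes F[n]$. The homological shift $[j]$ is exactly what the following chain compensates, valid once $a_j\le jN$:
\[
\sigma\otimes\cO(a_j)\preceq\sigma\otimes\cO(jN)\preceq\sigma\otimes\cO((j-1)N)[1]\preceq\cdots\preceq\sigma[j].
\]
This gives $\sigma\preceq\sigma\otimes\cO(-a_j)[j]$, and then \cite{ChunyiLiRemark}*{Lem. 2.2(3)} yields $\phi^-_\sigma(E\otimes\cO(-a_j)[j])\ge\theta$.

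The whole argument uses only $\preceq$, so no strict inequality is needed. That matters, because your fallback would not produce one: the Bayer property is itself non-strict, so enlarging $N_0$ and applying it again cannot upgrade $\sigma\otimes\cO(N)\preceq\sigma[1]$ to $\prec$.
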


\begin{proof}
We let $n:=\dim Y$. Let us consider a resolution of $i_*\ko_X$
\begin{equation}
    0\to F\to \ko(-a_{n-1})^{\oplus m_{n-1}}\to \cdots\to \ko(-a_1)^{\oplus m_1}\to \ko_Y\to i_*\ko_X\to 0
\end{equation}
where $F\in \Coh(Y)$ and $a_j,m_j\geq 0$. 
Choose $N\in \bN$ large enough that $jN\geq a_j$ for all $1\leq j\leq n-1$. To prove that $i^{\#}\sigma$ is a pre-stability condition, we apply \cite{Polishchuk}*{Lem. 2.2.2}; thus, we only need to verify that $i_*\ko_X\otimes\cP_{\sigma}(\theta)\subseteq \cP_{\sigma}[\theta,\infty).$

For $\theta\in (0,1]$ and $E\in \cP_{\sigma}(\theta)$, the object $i_*\ko_X\otimes E\in \DCoh(Y)$ admits a resolution with quotients 
\begin{equation*}
    E,\;\; E\otimes\ko_Y(-a_j),\;\; E\otimes F[n],
\end{equation*}
hence 
\begin{equation}
\label{E:boundmin}
    \phi^-_{\sigma}(i_*\ko_X\otimes E)\geq \min\{\theta, \phi^-_{\sigma}(E\otimes\ko_Y(-a_j)), \phi^-_{\sigma}(E\otimes F[n])\}.
\end{equation}
Thus, we need to prove that all three quantities in the minimum in \eqref{E:boundmin} above are $\ge \theta$. By the Bayer property, the restriction-N property, and \cite{ChunyiLiRemark}*{Lem. 2.2(4)}, we get
\begin{equation*}
    \sigma\otimes\ko(a_j)\preceq \sigma\otimes \ko(jN) \preceq \sigma\otimes\ko(jN-N)[1]\preceq \cdots  \preceq \sigma[j].
\end{equation*}
In particular, $\sigma\preceq \sigma\otimes\ko(-a_j)[j]=:\omega$. Observe that $E':=E\otimes\ko(-a_j)[j]$ is $\omega$-semistable, hence by \cite{ChunyiLiRemark}*{Lem. 2.2(3)} we obtain 
\begin{equation*}
    \phi_{\omega}(E') \leq \phi^-_{\sigma}(E').
\end{equation*}
By definition, $\phi_{\omega}(E')=\phi_{\sigma} (E)=\theta$, and thus $\phi^-_{\sigma}(E\otimes\ko(-a_j)[j])\geq \theta.$ 

It remains to verify that $\phi^-_{\sigma}(E\otimes F[n])\geq \theta$. Since $\sigma$ is geometric, we may assume that for all $y\in Y$ we have $\ko_y\in \cP_{\sigma}(1)$. The same reasoning as in \cite{BridgelandK3}*{Lem. 10.1} implies that 
\begin{equation*}
    E\in \cP_{\sigma}(\theta)\subseteq \Coh(Y)[0,n-1],\;\;\;\;\; \Coh(Y)\subseteq \cP_{\sigma}(1-n,1].
\end{equation*}
As $F\in \Coh(Y)$ we get that
\begin{equation*}
    E\otimes F[n]\in \Coh(Y)[n,3n-1]\subseteq \cP_{\sigma}(1,3n],
\end{equation*}
in particular $\phi^-_{\sigma}(E\otimes F[n])>1\geq \theta$.

Next, by definition, for any $x\in X$ we have that $i_*\ko_x=\ko_{i(x)}\in \cP_{\sigma}(1)$ and hence 
$\ko_x\in \cP_{i^{\#}\sigma}(1)$. Thus, $i^{\#}\sigma$ is a geometric pre-stability condition on $\DCoh(X)$. Moreover, for any non-zero $i^{\#}\sigma$-semistable object $E$ we have that $i_*E$ is semistable and 
\begin{equation*}
    \frac{|i^{\#}Z(E)|}{\norm{i_*(v_X(E))}}=\frac{|Z(i_*E)|}{\norm{v_Y(i_*E)}}
\end{equation*}
where $\lVert\:\cdot\:\rVert$ is any norm on $\cH^\bullet_M(Y)$. Thus, $i^{\#}\sigma$ is supported on $(\cH^\bullet(i), i_*\circ v_X)$.
\end{proof}

Let $X$ be a smooth projective variety, and let $(\cL_1,\ldots,\cL_k)$ be very ample line bundles on $X$. To them we associate a closed immersion $i:X \hookrightarrow \bf{P}^\chi \coloneqq  \prod_{j=1}^k \bf{P}^{\dim|\cL_j|}$.

\begin{cor}
\label{C:maincorollary}
    Let $X$ be a smooth projective variety. In the above notation, the category $\DCoh(X)$ admits geometric stability conditions supported on $(\cH^\bullet_M(i),i_*\circ v_X)$.
\end{cor}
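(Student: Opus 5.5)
The plan is to combine Theorem~\ref{T:ellipticconditions} with Theorem~\ref{T:maintheorem}, applied to the closed immersion $i:X\hookrightarrow \bP^\chi$ determined by $(\cL_1,\ldots,\cL_k)$. Here $\chi=(n_1,\ldots,n_k)$ with $n_j=\dim|\cL_j|$. We take $Y=\bP^\chi$ with the very ample line bundle $\ko_{\bP^\chi}(1)=\ko(1,\ldots,1)$. The map $i$ is a closed immersion: already its first component $X\to \bP^{n_1}$ is one, and $i$ is this component composed with the graph of the remaining components. Theorem~\ref{T:maintheorem} then produces an integer $N_0$, depending only on $i$ and $\ko_{\bP^\chi}(1)$, through a resolution of $i_*\ko_X$ by sums of negative twists of $\ko_{\bP^\chi}(1)$.

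Next, set $N:=N_0$ and apply Theorem~\ref{T:ellipticconditions} with this $N$. This gives a constant $C_N$, and for any $(a,b)$ with $a,\lvert b\rvert\ge C_N$ it gives a geometric stability condition $\sigma=\pi_{\#}\sigma^{a,b}\in\Stab_{(\cH^\bullet,\ch)}(\bP^\chi)$. This $\sigma$ satisfies the Bayer property $\sigma\preceq\sigma\otimes\ko_{\bP^\chi}(1)$ and the restriction-$N$ property $\sigma\otimes\ko_{\bP^\chi}(N)\prec\sigma[1]$. The relation $\prec$ implies $\preceq$, since $\cP(<\theta)\subset\cP(\le\theta)$, so hypothesis (2) of Theorem~\ref{T:maintheorem} holds with $N\ge N_0$.

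The remaining point is that Theorem~\ref{T:maintheorem} requires $\sigma\in\Stab_{(\cH^\bullet_M,v)}(\bP^\chi)$, whereas Theorem~\ref{T:ellipticconditions} gives $\sigma\in\Stab_{(\cH^\bullet,\ch)}(\bP^\chi)$. By Remark~\ref{R:changeoflattice} these two sets are equal. The underlying reason is Lemma~\ref{L:changeoflattice} applied to $\alpha=\sqrt{\td(\bP^\chi)}\cup(-)$, which is invertible because its degree-zero part is $1$. So $\sigma$ is a legitimate input. Theorem~\ref{T:maintheorem} then shows that $i^{\#}\sigma$ is a geometric stability condition on $\DCoh(X)$ supported on $(\cH^\bullet_M(i),i_*\circ v_X)$, which is the statement.

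The only delicate step is the order of quantifiers: $N_0$ must be fixed from $i$ before choosing $N$ and then $(a,b)$. Both theorems are stated in exactly this order, so I expect no real obstacle.
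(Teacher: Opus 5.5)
Your proposal is correct and is the paper's own argument: Theorem~\ref{T:ellipticconditions} supplies a geometric, full-support $\sigma$ on $\bP^\chi$ with the Bayer and restriction-$N$ properties for any chosen $N\ge N_0$, and Theorem~\ref{T:maintheorem} applied to $i$ then gives the result. Your remarks on $\prec\Rightarrow\preceq$ and on the equality $\Stab_{(\cH^\bullet,\ch)}=\Stab_{(\cH^\bullet_M,v)}$ only make explicit what the paper leaves implicit; the one small fix is to take $N=\max(N_0,1)$, since Theorem~\ref{T:ellipticconditions} requires $N\ge 1$.
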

\begin{proof}
Consider the closed immersion 
    %The strongly ample vector bundle \FR{what is a very ample vector bundle? Is this standard terminology?}\alekos{changed} $\cL_1\oplus \cdots \oplus \cL_r$ gives a closed immersion 
    $i:X \to \bP^\chi$.
    %, where $\chi = \sum_{i=1}^k \dim \lvert \cL_i\rvert$. 
Theorem~\ref{T:ellipticconditions} gives geometric stability conditions $\sigma$ on $\DCoh(\bP^\chi)$ with full support satisfying the Bayer property and restriction-N property, for any fixed $N \ge 1$. Thus, Theorem~\ref{T:maintheorem} applied to $i:X\to \bP^\chi$ yields the result.
\end{proof}

We will see that in many cases there is a natural identification $\H^\bullet_{\rm{alg}}(X)_\QQ\cong\cH^\bullet_M(i)_\QQ$ and the stability condition in Corollary~\ref{C:maincorollary} will give a geometric stability condition with full support --- see Corollary~\ref{C:fullsupport}.

\begin{lem}\label{L:restrictioninjective}
    Let $i:X\to Y$ be a closed immersion of smooth projective varieties. If the ambient cohomology $i^*\H^\bullet(Y, \bQ)$ satisfies Poincar\'e duality, then the restriction of the map $i_*:\H^\bullet(X,\bQ)\to \H^\bullet(Y,\bQ)$ to the image of $i^*$ is injective. 
\end{lem}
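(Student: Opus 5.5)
The argument rests on the projection formula together with non-degeneracy of the cup product on $A^\bullet := i^*\H^\bullet(Y,\bQ)\subseteq \H^\bullet(X,\bQ)$. Throughout, ``$A^\bullet$ satisfies Poincar\'e duality'' means that the pairing $A^\bullet\times A^\bullet\to \bQ$, $(\alpha,\beta)\mapsto \int_X \alpha\cup\beta$, is non-degenerate.

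Let $\alpha\in A^\bullet$ with $i_*\alpha=0$; we must show $\alpha=0$. For any $\gamma\in\H^\bullet(Y,\bQ)$ the projection formula gives $i_*(\alpha\cup i^*\gamma)=i_*\alpha\cup\gamma=0$. Integrate over $Y$, using $\int_Y i_*(-)=\int_X(-)$, which holds because pushforward to a point factors through $Y$. This yields
\[
\int_X \alpha\cup i^*\gamma=\int_Y i_*\alpha\cup\gamma=0 \quad\text{for all }\gamma\in \H^\bullet(Y,\bQ).
\]
Since $i^*\gamma$ ranges over all of $A^\bullet$, the class $\alpha$ pairs to zero with every element of $A^\bullet$. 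Non-degeneracy of the pairing on $A^\bullet$ then forces $\alpha=0$, which proves injectivity.

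Two points need checking. First, the projection formula for the Gysin map $i_*$ in rational cohomology of compact complex manifolds; this is standard, since $i_*$ is Poincar\'e dual to the pushforward in homology. Second, $\H^\bullet$ is not graded-commutative in general, so one should check that grading signs do not interfere. They do not: all classes relevant here are even-degree (or one may restrict to even degrees), and in any case the vanishing statement is insensitive to signs. The main step is simply recognizing that the duality hypothesis is exactly the non-degeneracy needed after applying the projection formula; no real obstacle is expected.
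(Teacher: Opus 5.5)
Your proof is correct and is essentially the paper's: both combine the projection formula, the identity $\int_Y i_*(-)=\int_X(-)$, and non-degeneracy of the pairing on $i^*\H^\bullet(Y,\bQ)$. The paper argues contrapositively via $i_*i^*\alpha'=\alpha'\cup\eta_X$ and reduces to a single degree, which your direct version does not need; one wording fix is that cohomology \emph{is} graded-commutative (just not strictly commutative), which is exactly why the signs are harmless.
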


\begin{proof}
    Let $\alpha = i^*(\alpha')$ be given, for $\alpha' \in \H^k(Y,\bQ)$. By the projection, formula $i_*\alpha = i_*(i^*\alpha' \cup 1) = \alpha' \cup \eta_X$, where $\eta_X \in \H^{2c}(X,\bQ)$ is the Poincar\'{e} dual class to the fundamental class of $X$ in $Y$ and $c = \dim Y - \dim X$. In particular, $i_*$ is a graded map of degree  $2c$ on $i^*\H^{\bullet}(Y,\bQ)$ and we can verify injectivity degree by degree. If $\alpha = i^*(\alpha')$ is non-trivial, then by Poincar\'e duality there exists $\beta\in \H^{2\dim X - k}(X,\bQ)$ such that $\int_X \beta \cup i^* \alpha'\not = 0.$ By hypothesis, we can assume $\beta = i^*\beta'$ for some $\beta'\in \H^{2\dim X - k} (Y, \bQ)$, so that by projection formula we get
    \[
    0\neq\int_X \beta \cup i^* \alpha' = \int_Y i_*i^*(\alpha'\cup \beta') = \int_Y \eta_X \cup\alpha'\cup \beta'.
    \]
     Hence $\eta_X \cup\alpha' \not = 0$ and the claim follows.
\end{proof}

Let $X$ be a complex projective variety and consider the integral subalgebra $L_X^\bullet \subset \H^\bullet(X,\bQ)$ generated by the divisor classes. 
In particular, $L_X^\bullet\otimes_{\ZZ} \QQ  =  \bf{Q}\langle \NS(X)\rangle$ is a graded subalgebra of $\H^\bullet_{\rm{alg}}(X)_{\bQ}$, which is sometimes called the \emph{Lefschetz algebra} of $X$ --- cf. \cite{Lefschetzclasses}.

\begin{rem}
Pushforward need not be injective on ambient cohomology as we explain here. Huh and Wang costructed a smooth projective variety $X$ whose Lefschetz algebra does not satisfy Poincar\'e duality, which has the following characteristics \cite{Lefschetzclasses}*{\S 3.1}:
\[
\dim X=5; \quad \rho_X =2; \quad \dim L_X^2\otimes_\bf Z \bf Q = 3; \quad \dim  L^3_X\otimes_\bf Z \bf Q = 4.
\]
Consider the embedding $i\colon X \to \bP^\chi$, then $i^*\H^6(\bP^\chi,\mathbf Q) = L^3_X\otimes_\bf Z \bf Q$ which has dimension 4, and $i_*$ has target $\H^{2\dim X -4}(\bP^\chi, \bf Q) \simeq \H^{4}(\bP^\chi, \bf Q)$ which has dimension 3. We conclude that the restriction to the ambient cohomology of $i_*$ has at least a one-dimensional kernel.
\end{rem}

\begin{defn}
    A smooth projective variety $X$ is called of \emph{Lefschetz type} if the
    Lefschetz algebra  $L_X^\bullet \otimes_{\bf{Z}} \bf{Q}$ satisfies Poincar\'{e} duality, i.e.
    the restriction of the cup product  is non-degenerate.
\end{defn}

Note that a smooth projective variety $X$ for which Standard conjecture A (see \cite{Kleimanstandard}) holds and such that $L^\bullet_X \otimes_{\ZZ}\QQ=\H^\bullet_{\rm{alg}}(X)_{\QQ}$ is of Lefschetz type.

\begin{cor}
\label{C:fullsupport}
    Any smooth projective variety $X$ of Lefschetz type such that $L^\bullet_X \otimes_{\ZZ}\QQ=\H^\bullet_{\rm{alg}}(X)_\QQ$ admits a geometric stability condition with full support.
\end{cor}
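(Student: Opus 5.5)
We take the geometric stability condition $\sigma' := i^{\#}\sigma$ produced by Corollary~\ref{C:maincorollary} and show that it already has full support. Here $i:X\hookrightarrow \bP^\chi=\prod_j \bP^{\dim|\cL_j|}$ is the closed immersion given by very ample line bundles $\cL_1,\dots,\cL_k$. We choose these so that their classes span $\NS(X)_\bQ$; this is possible because the ample cone is open in $\NS(X)_\bR$, so very ample classes span. By Corollary~\ref{C:maincorollary}, $\sigma'$ is supported on $(\cH^\bullet_M(i), i_*\circ v_X)$. By Remark~\ref{R:changeoflattice}, it then suffices to show that
\[
i_*:\cH^\bullet_M(X)_\bQ=\H^\bullet_{\rm alg}(X)_\bQ\to \H^\bullet(\bP^\chi,\bQ)
\]
is injective.

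Indeed, if $i_*$ is injective, it identifies $\cH^\bullet_M(X)$ with a finite-index sublattice of $\cH^\bullet_M(i)$. The support inequality with respect to a norm pulled back along $i_*$ is then exactly the inequality already established. The central charge $i^{\#}Z=Z\circ i_*$ factors through $v_X$, hence through $\ch$ after twisting by $\sqrt{\td(X)}$, as in Lemma~\ref{L:changeoflattice}. This mirrors the proofs of Propositions~\ref{P:pushforward} and \ref{P:pullback}.

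To get injectivity we use Lemma~\ref{L:restrictioninjective}. The cohomology of $\bP^\chi$ is generated by the hyperplane classes $h_j$ of the factors, and $i^*h_j=c_1(\cL_j)$. So $i^*\H^\bullet(\bP^\chi,\bQ)$ is the subalgebra generated by the $c_1(\cL_j)$. Since these span $\NS(X)_\bQ$, this subalgebra is $L^\bullet_X\otimes\bQ$, which by hypothesis equals $\H^\bullet_{\rm alg}(X)_\bQ$. The Lefschetz type assumption says exactly that this image satisfies Poincar\'e duality, so Lemma~\ref{L:restrictioninjective} shows that $i_*$ is injective on $i^*\H^\bullet(\bP^\chi,\bQ)=\H^\bullet_{\rm alg}(X)_\bQ$.

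The main obstacle is to make sure that the choice of the $\cL_j$ is compatible with the construction, and that Poincar\'e duality is meant with respect to the pairing $\int_X$ on $L^\bullet_X\otimes\bQ$, which is how Lemma~\ref{L:restrictioninjective} uses it. Corollary~\ref{C:maincorollary} allows arbitrary very ample $\cL_j$, so the spanning choice is permitted. The remaining point is to check that the Mukai twist $\sqrt{\td(X)}$ preserves $\H^\bullet_{\rm alg}(X)_\bQ$: it is an invertible algebraic class, so it does. Hence injectivity on $\H^\bullet_{\rm alg}(X)_\bQ$ gives injectivity on $\cH^\bullet_M(X)$.
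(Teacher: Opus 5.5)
Your proposal is correct and follows the paper's proof. You choose very ample $\cL_1,\dots,\cL_k$ whose classes span $\NS(X)_\bQ$, observe that $i^*\H^\bullet(\bP^\chi,\bQ)=L^\bullet_X\otimes\bQ=\H^\bullet_{\mathrm{alg}}(X)_\bQ$, apply Lemma~\ref{L:restrictioninjective} to get injectivity of $i_*$ on $\cH^\bullet_M(X)$, and conclude that the stability conditions of Corollary~\ref{C:maincorollary} have full support. The only differences are that you verify the transfer of the support property by hand where the paper cites Proposition~\ref{P:pullback}, and that injectivity actually makes $i_*$ an isomorphism onto $\cH^\bullet_M(i)$ (defined as its image), not merely onto a finite-index sublattice.
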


\begin{proof}
    Consider very ample line bundles $\cL_1,\ldots, \cL_k$ such that $\QQ\langle \ch_1(\cL_j)\rangle_{j=1,\dots, k}=L_X^\bullet\otimes_{\ZZ} \QQ$ and the associated closed immersion $i:X\to \bP^\chi$. It follows from Lemma~\ref{L:restrictioninjective} that $i_*:L^\bullet_X\otimes_\ZZ \QQ \to \H^\bullet(\bP^\chi,\bQ)$ is injective: indeed by the construction of $i$ we have  $i^*\H^\bullet(\bP^\chi,\bQ) =L^\bullet_X\otimes_\ZZ \QQ$.
    Therefore, $i_* :\H^\bullet_{\rm{alg}}(X)_\QQ=L^\bullet_X\otimes_{\bf{Z}} \bQ\to \H^\bullet(\PP^\chi,\QQ)$ is injective.
    We conclude that
    $i_*:\cH_M^\bullet(X) \to \cH_M^\bullet(\bP^\chi)$ is also injective and thus the stability conditions constructed in Corollary~\ref{C:maincorollary} have full support by Proposition~\ref{P:pullback}.
\end{proof}

\begin{ex}
\label{ex:examples}
    The following are examples of varieties of Lefschetz type  satisfying  $L^\bullet_X \otimes_{\ZZ}\QQ=\H^\bullet_{\rm{alg}}(X)_\QQ$. 
    By Corollary~\ref{C:fullsupport}, they admit geometric stability conditions with full support: \vspace{-2mm}
    \begin{enumerate}
        \item All projective curves and surfaces are of Lefschetz type. Thus, Theorem~\ref{T:maintheorem} gives an alternative construction of geometric stability conditions supported on the numerical Grothen\-dieck group for algebraic surfaces, as in \cites{ABLStabilitysurf,BridgelandK3}. Indeed, this follows from the observation that the Chern character induces a homomorphism $\K_{\rm{num}}(X)\to \H^\bullet_{\rm{alg}}(X)_{\bQ}$ which is a rational isomorphism for $\dim X \le 2$. \vspace{-2mm}
        \item Let $X$ be a smooth and projective threefold. The Hard Lefschetz theorem gives an iso\-morphism $\H^{1,1}(X,\bQ) \to \H^{2,2}(X,\bQ)$ coming from cupping with the class of any ample divisor hence $L^\bullet_X \otimes_{\ZZ}\QQ=\H^\bullet_{\rm{alg}}(X)_\QQ$. 
        By \cite{Lefschetzclasses}*{Prop. 4}, all threefolds are of Lefschetz type. 
        As a very special case, we obtain stability conditions with full support on all smooth and projective Calabi--Yau threefolds.
        \vspace{-2mm}
        \item Smooth projective toric varieties have algebraic cohomology integrally generated in degree $2$ --- see \cite{danilov}*{Cor. 10.7.2} and the subsequent discussion or \cite{FultonToricvarieties}*{p. 106}. \vspace{-2mm}
        \item 
        By 
        \cite{Lefschetzclasses}*{Prop. 5} all smooth complete intersections in $\PP^n$ are of Lefschetz type.
        
        Smooth complete intersections of odd dimension have algebraic cohomology generated by the ample class and thus are of Lefschetz type. The same is true for very general complete intersection of even dimension, excluding quadrics, $(2,2)$-complete intersections and cubic surfaces --- see \cite{SGA7_II}*{Thm. 1.3}. \vspace{-2mm} 
        \item Abelian varieties are of Lefschetz type by \cite{Lefschetzclasses}*{Prop. 4}.
        Very general Abelian varieties have algebraic cohomology generated by an ample divisor --- see \cite{Mattuck}.
        \item All wonderful varieties $X$ satisfy $L^\bullet_X \otimes_{\ZZ}\QQ=\H^\bullet_{\rm{alg}}(X)_\QQ$  --- see \cites{SemismallBHMPW,dCP}. Furthermore, they are of Lefschetz type by the results of \cite{Lefschetzclasses}. Notable examples of varieties in this class are the Deligne--Mumford--Knudsen moduli spaces $\overline{M}_{0,n}$ of genus $0$ curves with $n$ marked points. \vspace{-2mm}
        \item Suppose that $X$ satisfies $L^\bullet_X \otimes_{\ZZ}\QQ=\H^\bullet_{\rm{alg}}(X)_\QQ$ and let $E$ be an algebraic vector bundle over $X$. The projective bundle $\bP(E) \to X$ also satisfies $L^\bullet_{\bP(E)} \otimes_{\ZZ}\QQ=\H^\bullet_{\rm{alg}}(\bP(E))_\QQ$ by the Leray--Hirsch theorem. Furthermore, if $X$ satisfies Standard conjecture A then so does $\bP(E)$ so that $\bP(E)$ is of Lefschetz type.  Similarly, the class of varieties for which Corollary~\ref{C:fullsupport} applies is closed under products. \vspace{-2mm}
    \end{enumerate}
\end{ex}

Though Theorem~\ref{T:maintheorem} and Corollary~\ref{C:fullsupport} give many new examples of varieties admitting stability cond\-itions with full support, many varieties are not of Lefschetz type \cite{Lefschetzclasses} or do not have algebraic cohomology generated by divisor classes. However, the following conjecture implies the existence of stability conditions with full support on all smooth projective varieties for which the restriction of the Poincaré pairing on $\H^\bullet_{\rm{alg}}(X)_\QQ$ is non-degenerate  --- see Proposition~\ref{P:grassmannian}. 
Note that the Standard conjecture A implies that 
the Poincaré pairing on algebraic cohomology classes is non-degenerate.

\begin{conj}
\label{C:grassmannian_result}
    Given a product of Grassmannians $Y:=\prod_i\Gr(n_i,k_i)$ and $N\in \bZ_{\ge 1}$, there exists a geometric stability condition with full support on $Y$ satisfying conditions (1) and (2) of Theorem \ref{T:maintheorem}.
    %the Bayer property and the restriction-$N$ property.
\end{conj}
\begin{prop}
\label{P:grassmannian}
    If Conjecture~\ref{C:grassmannian_result} holds, then all smooth projective varieties $X$, such that $\H^\bullet_{\rm{alg}}(X)_\QQ$ satisfies Poincaré duality, admit geometric stability conditions with full support.
\end{prop}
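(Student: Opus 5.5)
The plan is to imitate the proof of Corollary~\ref{C:fullsupport}, with products of Grassmannians playing the role of $\bP^\chi$. The Grassmannian classes that are pulled back will be Chern classes of tautological bundles, rather than only divisors.

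First, choose finitely many globally generated vector bundles $\cE_1,\ldots,\cE_m$ on $X$ whose Chern classes generate $\H^\bullet_{\rm{alg}}(X)_\QQ$ as a $\QQ$-algebra. To see that such bundles exist, recall that every algebraic class is a $\QQ$-combination of Chern characters of coherent sheaves. Each coherent sheaf has a finite resolution by sums of twisted very ample line bundles and globally generated bundles. Chern characters of vector bundles are polynomials in their Chern classes, and Chern classes of twists of globally generated bundles can be expanded. So we may take $\cE_j$ to be globally generated bundles $\cF$ with $\ch(\cF)$ spanning, together with one very ample line bundle $\cL$. Add $\cL$ to the list so that the resulting map is a closed immersion.

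Each globally generated $\cE_j$ of rank $r_j$ with $k_j$ sections gives a morphism $\phi_j:X\to \Gr(k_j,r_j)$ (quotient convention), with $\phi_j^*Q=\cE_j$. The line bundle $\cL$ gives an embedding into $\bP^{N}=\Gr(N+1,1)$. The product $i:X\to Y:=\prod_j \Gr(k_j,r_j)$ is then a closed immersion. Since $\H^\bullet(Y,\QQ)$ is generated by the Chern classes of the tautological quotient bundles, $i^*\H^\bullet(Y,\QQ)$ is the subalgebra generated by the $c(\cE_j)$. By construction this is exactly $\H^\bullet_{\rm{alg}}(X)_\QQ$; it is contained in the algebraic classes since $Y$ has only algebraic cohomology.

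Next, I expect the injectivity argument to transfer directly. Since $i^*\H^\bullet(Y,\QQ)=\H^\bullet_{\rm{alg}}(X)_\QQ$ satisfies Poincaré duality by hypothesis, Lemma~\ref{L:restrictioninjective} applies verbatim: its proof uses only the projection formula. Hence $i_*:\H^\bullet_{\rm{alg}}(X)_\QQ\to \H^\bullet(Y,\QQ)$ is injective. Multiplying by $\sqrt{\td}$ is invertible, and by Example~\ref{E:twistchern} we have $i_*\circ v_X = v_Y\circ i_*$. Therefore $i_*:\cH^\bullet_M(X)\to\cH^\bullet_M(Y)$ is injective, and $\cH^\bullet_M(i)_\QQ\cong \H^\bullet_{\rm{alg}}(X)_\QQ$.

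Finally, fix a very ample $\ko(1)$ on $Y$ (for instance the product of Plücker bundles) and take $N_0$ from Theorem~\ref{T:maintheorem}. Conjecture~\ref{C:grassmannian_result}, applied with some $N\ge N_0$, gives a geometric $\sigma$ on $Y$ with full support satisfying the Bayer and restriction-$N$ properties. The restriction-$N$ property in the form $\prec$ implies the $\preceq$ form required. Theorem~\ref{T:maintheorem} then produces a geometric $i^{\#}\sigma$ supported on $(\cH^\bullet_M(i),i_*\circ v_X)$. By the injectivity above and Proposition~\ref{P:pullback}, this stability condition has full support.

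The main obstacle is the first step: ensuring that globally generated bundles can be chosen whose Chern classes generate all algebraic classes, so that $i^*\H^\bullet(Y)$ equals, and does not merely sit inside, $\H^\bullet_{\rm{alg}}(X)_\QQ$. Mere containment would not let us invoke the Poincaré duality hypothesis in Lemma~\ref{L:restrictioninjective}. The argument for this step relies on the resolution of sheaves together with the splitting principle applied to twists.
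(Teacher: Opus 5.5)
Your proposal is correct and follows the paper's proof. You embed $X$ into a product of Grassmannians using bundles whose Chern classes generate $\H^\bullet_{\rm{alg}}(X)_\QQ$, so that $i^*\H^\bullet(Y,\QQ)=\H^\bullet_{\rm{alg}}(X)_\QQ$, and then combine Lemma~\ref{L:restrictioninjective}, the conjecture, Theorem~\ref{T:maintheorem} and Proposition~\ref{P:pullback}. The only variation is how you get a closed immersion: you add one very ample line bundle to globally generated bundles, whereas the paper twists to strongly ample bundles (Lemma~\ref{L:veryample}) so that each factor is already an embedding; both work.
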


Before the proof of Proposition~\ref{P:grassmannian} let us recall some standard facts about algebraic classes, (strongly) ample vector bundles, and Grassmannians.

We say that a vector bundle $\cE$ of rank $r$ on a variety $X$ is \emph{strongly ample} if for all (closed) points $x\in X$, the sheaf $\cE\otimes \cI_x$ is globally generated.\footnote{Here, $\cI_x$ is the ideal sheaf of $x$.} By \cite{FultonAmpleVB}*{p. 172}, this condition implies that $\cE$ is ample in the sense of Hartshorne \cite{AmpleVBHartshorne} and that the induced morphism $f:X\to \Gr(\H^0(X,\cE),r)$,  mapping $x\in X$ to the quotient $\H^0(X,\cE)\twoheadrightarrow \cE_x$, is a closed immersion such that $f^*\cQ = \cE$, where $\cQ$ is the tautological quotient bundle on $\Gr(\H^0(X,\cE), r)$.

\begin{lem}
\label{L:veryample}
    Let $\cE$ be a vector bundle on a variety $X$ and $\cL$ an ample line bundle. There exists $N\ge 1$ such that $\cE\otimes \cL^{\otimes N}$ is strongly ample.
\end{lem}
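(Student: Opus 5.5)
We will deduce the statement from Castelnuovo--Mumford regularity together with Serre vanishing; we assume $X$ is projective, as is implicit in the presence of an ample line bundle $\cL$. The key input is Mumford's theorem: if a coherent sheaf $\cF$ is $0$-regular with respect to a globally generated ample line bundle $\cO_X(1)$, i.e.\ $\H^i(X,\cF(-i))=0$ for all $i>0$, then $\cF$ is globally generated. First we replace $\cL$ by a very ample power $\cL^{\otimes m}$ to serve as $\cO_X(1)$. Proving the claim for $\cE\otimes\cL^{\otimes r}$ for each residue $0\le r<m$ with respect to $\cL^{\otimes m}$ then yields a bound for $\cL$ itself. Alternatively, and more simply, we show that $\cE\otimes\cL^{\otimes N}\otimes\cI_x$ is globally generated for every $N$ larger than a single bound. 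So we may assume $\cL$ is very ample, giving an embedding $X\subset\bP^M$ with $\cL=\cO(1)$.

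The main step is to bound the regularity of $\cE\otimes\cI_x$ uniformly in the closed point $x$. Consider the exact sequence
\[
0\to \cE\otimes\cI_x\to \cE\to \cE\otimes k(x)\to 0 .
\]
Twisting by $\cL^{\otimes(N-i)}$ and taking cohomology, we find for $i\ge 2$ that $\H^i(\cE\otimes\cI_x\otimes\cL^{N-i})\cong \H^i(\cE\otimes\cL^{N-i})$, because the skyscraper has no higher cohomology. For $i=1$ we get $\H^1(\cE\otimes\cI_x\otimes\cL^{N-1})=0$ as soon as $\H^1(\cE\otimes\cL^{N-1})=0$ and the evaluation map $\H^0(\cE\otimes\cL^{N-1})\to \cE_x\otimes\cL^{N-1}_x$ is surjective. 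Choose $N_1$ by Serre vanishing such that $\H^i(\cE\otimes\cL^{k})=0$ for all $i>0$ and all $k\ge N_1-\dim X$; this is a finite list of conditions since only $0<i\le\dim X$ matter. Also choose $N_1$ so that $\cE\otimes\cL^{k}$ is globally generated for $k\ge N_1-1$. Then for $N\ge N_1$ the sheaf $\cE\otimes\cL^{N}\otimes\cI_x$ is $0$-regular with respect to $\cL$, for every $x$ simultaneously, and Mumford's theorem gives global generation. The key point is that all the bounds depend only on $\cE$ and $\cL$, not on $x$.

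The only subtle point, and the one we expect to be the main obstacle, is uniformity in $x$, which the above handles via the long exact sequence rather than via regularity of the family $\{\cI_x\}$. An alternative we keep in reserve is to apply relative Serre vanishing to $p_1^*\cE\otimes p_2^*\cL^N\otimes\cI_\Delta$ on $X\times X$, pushed forward to the second factor, combined with cohomology and base change. In summary, we conclude that $\cE\otimes\cL^{\otimes N}$ is strongly ample for $N\ge N_1$.
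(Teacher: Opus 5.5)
Your argument is correct, but it follows a different route from the paper.

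The paper works on $X\times X$. Since $\cL\boxtimes\cL$ is ample, Serre's theorem makes $\cI_\Delta\otimes p_2^*\cE\otimes(\cL^{\otimes N}\boxtimes\cL^{\otimes N})$ globally generated for $N\gg 0$. Restricting to the fibre $p_1^{-1}(x)\cong X$, where $\cI_\Delta$ restricts to $\cI_x$ because $\cO_\Delta$ is flat over the first factor, then gives global generation of $\cI_x\otimes\cE\otimes\cL^{\otimes N}$ for all $x$ at once. This is essentially the option you keep in reserve. The difference is that no relative vanishing or base change is needed: uniformity in $x$ is automatic, because one global generation statement on $X\times X$ restricts to every fibre.

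Your proof instead stays on $X$ and gets uniformity from the long exact sequence of $0\to\cE\otimes\cI_x\to\cE\to\cE\otimes k(x)\to 0$. In effect it shows that $\cE\otimes\cI_x$ is $(m+1)$-regular whenever $\cE$ is $m$-regular. The gain is an effective bound: $N=m+1$ works when $\cL$ is very ample and $\cE$ is $m$-regular. The price is that you need Serre vanishing and Mumford's theorem, hence projectivity of $X$. That is harmless here, since the lemma is only applied to smooth projective varieties, but an ample line bundle by itself does not force projectivity. The paper's argument uses only Serre's global generation theorem, so it also works for quasi-projective $X$.

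Your reduction to very ample $\cL$ can be shortened. Only one $N$ is required, so if $\cL^{\otimes m}$ is very ample and $\cE\otimes\cL^{\otimes mN'}$ is strongly ample, you may simply take $N=mN'$.
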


\begin{proof}
    We consider the product $X\times X$ and denote the projections by $p_i$ for $i=1,2$. Given a closed point $x$ of $X$, we write $i_x:X\to X\times X$ for the inclusion of the fibre $p_1^{-1}(x)$. The line bundle $\cL\boxtimes \cL$ is ample on $X\times X$ and thus $\cF^N := \cI_\Delta \otimes p_2^*\cE \otimes (\cL^{\otimes N} \boxtimes \cL^{\otimes N})$ is generated by global sections for $N$ sufficiently large. Thus, restricting along $i_x^*$, we see that 
    \[
        i_x^*\cF^N = \cI_x \otimes \cE \otimes \cL_x^{\otimes N} \otimes \cL^{\otimes N} \cong \cI_x\otimes \cE \otimes \cL^{\otimes N}
    \]
    is generated by global sections for any $x\in X$.
\end{proof}

\begin{proof}[Proof of Proposition~\ref{P:grassmannian}]
    Consider a smooth projective variety $X$. We first show that $\H^\bullet_{\rm{alg}}(X)_{\bQ}$ is generated as a $\bQ$-algebra by the Chern classes of strongly ample vector bundles. The vector space $\H^\bullet_{\rm{alg}}(X)_{\bQ}$ is spanned by the Chern characters of finitely many algebraic vector bundles, so consider $\ch(\cE)$ for $\cE$ a vector bundle on $X$. Let $\cL$ denote a very ample line bundle on $X$. By Lemma~\ref{L:veryample}, there exists an $n\ge 1$ such that $\cE\otimes \cL^{n}$ is very ample. Furthermore,
    \[
        e^{-nc_1(\cL)}\cdot \ch(\cE\otimes \cL^n) = \ch(\cE) 
    \]
    and thus $\ch(\cE)$ is expressed as a $\bQ$-linear combination of products of Chern classes of strongly ample vector bundles and very ample line bundles. Thus, there exists a finite set $\cE_1,\ldots, \cE_p$ of strongly ample vector bundles on $X$ whose Chern characters generate $\H^\bullet(X,\bQ)$ as a $\bQ$-algebra. Consider the closed immersion $f:X\to\prod_{i=1}^p\Gr(\H^0(X,\cE_i),r_i) =: G$ defined by $\cE_1\oplus \cdots \oplus \cE_p$, where $r_i = \rk \cE_i$. Letting $\cQ_i\to \Gr(\H^0(X,\cE_i),r_i)$ denote the universal quotient bundle, it follows that under the closed immersion $f_i:X\to \Gr(\H^0(X,\cE_i), r_i)$ we have $f_i^*(\cQ_i) = \cE_i$ for each $1\le i \le p$. Thus, $f^*:\H^\bullet(G,\bQ) \to \H^\bullet(X,\bQ)$ maps surjectively onto $\H^\bullet_{\rm{alg}}(X)_{\bQ}$.

    Next, if Conjecture~\ref{C:grassmannian_result} holds, we can apply 
    Theorem~\ref{T:maintheorem} to $f$ to obtain geometric stability conditions on $X$ supported on $(\cH^\bullet_M(f),f_*\circ v_X)$. On the other hand, $f_*$ is injective on $\H_{\rm{alg}}^\bullet(X)_{\bQ}$ by Lemma~\ref{L:restrictioninjective} as we assumed Poincar\'e duality on algebraic classes, and thus also on $\cH^\bullet_M(X)$. The full support condition now follows.
\end{proof}

% \bib, bibdiv, biblist are defined by the amsrefs package.

%\bibliographystyle{alpha}
%\bibliography{refs}

\begin{bibdiv}
\begin{biblist}

\bib{ABLStabilitysurf}{article}{
      author={Arcara, Daniele},
      author={Bertram, Aaron},
       title={Bridgeland-stable moduli spaces for {$K$}-trivial surfaces},
    language={eng},
        date={2013},
     journal={Journal of the European Mathematical Society},
      volume={015},
      number={1},
       pages={1\ndash 38},
         url={http://eudml.org/doc/277696},
}

\bib{AspinwallDouglasDbrane}{article}{
      author={Aspinwall, Paul},
      author={Douglas, Michael},
       title={D-brane stability and monodromy},
        date={200205},
     journal={Journal of High Energy Physics},
      volume={2002},
       pages={031\ndash 031},
}

\bib{Bayer_short}{article}{
      author={Bayer, Arend},
       title={A short proof of the deformation property of {B}ridgeland stability conditions},
        date={2019},
     journal={Mathematische Annalen},
      volume={375},
      number={3},
       pages={1597\ndash 1613},
         url={https://doi.org/10.1007/s00208-019-01900-w},
}

% \bib{Blanc_2015}{article}{
%       author={Blanc, Anthony},
%        title={Topological {K}-theory of complex noncommutative spaces},
%         date={2015},
%         ISSN={1570-5846},
%      journal={Compositio Mathematica},
%       volume={152},
%       number={3},
%        pages={489–555},
%          url={http://dx.doi.org/10.1112/S0010437X15007617},
% }

\bib{Br07}{article}{
      author={Bridgeland, Tom},
       title={Stability conditions on triangulated categories},
        date={2007},
        ISSN={0003-486X},
     journal={Ann. of Math. (2)},
      volume={166},
      number={2},
       pages={317\ndash 345},
         url={https://doi.org/10.4007/annals.2007.166.317},
      review={\MR{2373143}},
}

\bib{BridgelandK3}{article}{
      author={Bridgeland, Tom},
       title={Stability conditions on {K}3 surfaces},
    language={English},
        date={2008},
        ISSN={0012-7094},
     journal={Duke Math. J.},
      volume={141},
      number={2},
       pages={241\ndash 291},
}

\bib{SemismallBHMPW}{article}{
      author={Braden, Tom},
      author={Huh, June},
      author={Matherne, Jacob~P.},
      author={Proudfoot, Nicholas},
      author={Wang, Botong},
       title={A semi-small decomposition of the {Chow} ring of a matroid},
    language={English},
        date={2022},
        ISSN={0001-8708},
     journal={Adv. Math.},
      volume={409},
       pages={49},
        note={Id/No 108646},
}

\bib{BMunreasonable}{incollection}{
      author={Bayer, Arend},
      author={Macr{\`{\i}}, Emanuele},
       title={The unreasonable effectiveness of wall-crossing in algebraic geometry},
    language={English},
        date={2023},
   booktitle={International congress of mathematicians 2022, icm 2022, {H}elsinki, {F}inland, virtual, {J}uly 6--14, 2022. volume 3. sections 1--4},
   publisher={Berlin: European Mathematical Society (EMS)},
       pages={2172\ndash 2195},
}

\bib{ProjbiratBM}{article}{
      author={Bayer, Arend},
      author={Macrì, Emanuele},
       title={Projectivity and birational geometry of {B}ridgeland moduli spaces},
        date={2014},
        ISSN={08940347, 10886834},
     journal={Journal of the American Mathematical Society},
      volume={27},
      number={3},
       pages={707\ndash 752},
         url={http://www.jstor.org/stable/43302856},
}

\bib{stabilityonFanothreefolds}{article}{
      author={Bernardara, Marcello},
      author={Macrì, Emanuele},
      author={Schmidt, Benjamin},
      author={Zhao, Xiaolei},
       title={Bridgeland stability conditions on {F}ano threefolds},
        date={2017 September},
        ISSN={2491-6765},
     journal={Épijournal de Géométrie Algébrique},
      volume={Volume 1},
         url={http://dx.doi.org/10.46298/epiga.2017.volume1.2008},
}


\bib{YCheng}{article}{
    author={Cheng, Yiran},
    title={A remark on the full support property},
    language={English},
    date={2026-08},
    journal={arXiv e-prints},
    pages={arXiv:2608.14540},
}
    

\bib{dCP}{article}{
        author={Procesi, Claudio},
      author={De~Concini, Corrado},
       title={Wonderful models of subspace arrangements},
    language={English},
        date={1995},
        ISSN={1022-1824},
     journal={Sel. Math., New Ser.},
      volume={1},
      number={3},
       pages={459\ndash 494},
}

\bib{danilov}{article}{
      author={Danilov, Vladimir},
       title={The geometry of toric varieties},
        date={197804},
     journal={Russian Mathematical Surveys},
      volume={33},
       pages={97\ndash 154},
}

\bib{SGA7_II}{article}{
      author={Deligne, Pierre},
       title={Le théorème de {N}oether},
        date={1969},
     journal={SGA7 II, Exposé XIX},
}

\bib{DouglasDbranes}{incollection}{
      author={Douglas, Michael~R.},
       title={D-branes on {Calabi}-{Yau} manifolds},
    language={English},
        date={2001},
   booktitle={3rd european congress of mathematics (ECM), barcelona, spain, july 10--14, 2000. volume ii},
   publisher={Basel: Birkh{\"a}user},
       pages={449\ndash 466},
}

\bib{Mukaisprogram}{article}{
      author={Feyzbakhsh, Soheyla},
       title={Mukai's program (reconstructing a {{\(K3\)}} surface from a curve) via wall-crossing},
    language={English},
        date={2020},
        ISSN={0075-4102},
     journal={J. Reine Angew. Math.},
      volume={765},
       pages={101\ndash 137},
}

\bib{FultonAmpleVB}{article}{
      author={Fulton, William},
       title={Ample vector bundles, {Chern} classes, and numerical criteria},
    language={English},
        date={1976},
        ISSN={0020-9910},
     journal={Invent. Math.},
      volume={32},
       pages={171\ndash 178},
         url={https://eudml.org/doc/142370},
}

\bib{FultonToricvarieties}{book}{
      author={Fulton, William},
       title={Introduction to toric varieties. {The} 1989 {William} {H}. {Roever} lectures in geometry},
    language={English},
      series={Ann. Math. Stud.},
   publisher={Princeton, NJ: Princeton University Press},
        date={1993},
      volume={131},
        ISBN={0-691-00049-2},
}

\bib{FLZ}{article}{
      author={Fu, Lie},
      author={Li, Chunyi},
      author={Zhao, Xiaolei},
       title={Stability manifolds of varieties with finite {Albanese} morphisms},
    language={English},
        date={2022},
        ISSN={0002-9947},
     journal={Trans. Am. Math. Soc.},
      volume={375},
      number={8},
       pages={5669\ndash 5690},
}

\bib{GGI}{article}{
    author = {Galkin, Sergey},
    author = {Golyshev, Vasily},
    author = {Iritani, Hiroshi},
    title= {Gamma classes and quantum cohomology of Fano manifolds: Gamma conjectures},
    language={English},
    date={2016},
    ISSN={0012-7094},
    journal={Duke Mathematical Journal},
    volume={165},
    number={11},
    }


\bib{NMMP}{article}{
      author={{Halpern-Leistner}, Daniel},
       title={{The noncommutative minimal model program}},
        date={2023-01},
     journal={arXiv e-prints},
       pages={arXiv:2301.13168},
}

\bib{AmpleVBHartshorne}{article}{
      author={Hartshorne, Robin},
       title={Ample vector bundles},
    language={English},
        date={1966},
        ISSN={0073-8301},
     journal={Publ. Math., Inst. Hautes {\'E}tud. Sci.},
      volume={29},
       pages={63\ndash 94},
         url={https://eudml.org/doc/103864},
}

\bib{quasiconvergence}{article}{
      author={Halpern-Leistner, Daniel},
      author={Jiang, Jeffrey},
      author={Robotis, Antonios-Alexandros},
       title={Quasi-convergence of stability conditions},
        date={2026},
     journal={Sel. Math., New Ser., to appear},
}

\bib{augmented}{article}{
    author={Halpern-Leistner, Daniel},
    author={Robotis, Antonios-Alexandros},
      title={The space of augmented stability conditions}, 
      date={2025},
      url={https://arxiv.org/abs/2501.00710},
      note={arXiv:2501.00710}
}

\bib{Lefschetzclasses}{article}{
      author={Huh, June},
      author={Wang, Botong},
       title={Lefschetz classes on projective varieties},
    language={English},
        date={2017},
        ISSN={0002-9939},
     journal={Proc. Am. Math. Soc.},
      volume={145},
      number={11},
       pages={4629\ndash 4637},
}

\bib{karube}{article}{
      author={{Karube}, Tomohiro},
       title={{The noncommutative MMP for blowup surfaces}},
        date={2024-10},
     journal={arXiv e-prints},
       pages={arXiv:2410.18446},
}

\bib{Kleimanstandard}{incollection}{
      author={Kleiman, Steven~L.},
       title={The standard conjectures},
    language={English},
        date={1994},
   booktitle={Motives. {P}roceedings of the {S}ummer {R}esearch {C}onference on {M}otives, held at the university of {W}ashington, {S}eattle, WA, USA, july 20-august 2, 1991},
   publisher={Providence, RI: American Mathematical Society},
       pages={3\ndash 20},
}

\bib{KRZ}{article}{
      author={{Karube}, Tomohiro},
      author={{Robotis}, Antonios-Alexandros},
      author={{Zuliani}, Vanja},
       title={{Toward the noncommutative minimal model program for Fano varieties}},
        date={2026-01},
     journal={arXiv e-prints},
       pages={arXiv:2601.20739},
}

\bib{KS08}{article}{
      author={Kontsevich, Maxim},
      author={Soibelman, Yan},
       title={Stability structures, motivic {D}onaldson-{T}homas invariants and cluster transformations},
        date={2008-11},
        journal={arXiv e-prints},
       pages={arXiv:0811.2435},
         url={https://arxiv.org/abs/0811.2435},
}

\bib{LiQuintic}{article}{
      author={{Li}, Chunyi},
       title={On stability conditions for the quintic threefold},
        date={2019-05},
        ISSN={1432-1297},
     journal={Inventiones mathematicae},
      volume={218},
      number={1},
       pages={301–340},
         url={http://dx.doi.org/10.1007/s00222-019-00888-z},
}

\bib{Lirealreduction}{article}{
      author={{Li}, Chunyi},
       title={{A Real Reduction of the Manifold of Bridgeland Stability Conditions}},
        date={2025-06},
     journal={arXiv e-prints},
       pages={arXiv:2506.21995},
}

\bib{ChunyiLiRemark}{article}{
      author={{Li}, Chunyi},
       title={{A Remark on Stability Conditions on Smooth Projective Varieties}},
        date={2026-01},
     journal={arXiv e-prints},
       pages={arXiv:2601.22994},
}

\bib{2026stab_cond_modulispaces}{article}{
      author={{Li}, Chunyi},
      author={{Liu}, Zhiyu},
      author={{Liu}, Ziqi},
      author={{Macr{\`\i}}, Emanuele},
      author={{Perry}, Alexander},
      author={{Stellari}, Paolo},
      author={{Zhao}, Xiaolei},
       title={{Stability conditions and moduli spaces on projective families}},
        date={2026-07},
     journal={arXiv e-prints},
       pages={arXiv:2607.28411},
}

\bib{Lietal}{article}{
      author={Li, Chunyi},
      author={Macr{\`{\i}}, Emanuele},
      author={Perry, Alexander},
      author={Stellari, Paolo},
      author={Zhao, Xiaolei},
       title={Stability conditions on products of curves and {Hilbert} schemes of surfaces},
         how={Preprint, {arXiv}:2512.14207 [math.{AG}] (2025)},
        date={2025-12},
        journal={arXiv e-prints},
       pages={arXiv:2512.14207},
         url={https://arxiv.org/abs/2512.14207},
}

\bib{Macricurves}{article}{
      author={Macrì, Emanuele},
       title={Stability conditions on curves},
        date={200706},
     journal={Mathematical Research Letters},
      volume={14},
}

\bib{Mattuck}{article}{
      author={Mattuck, Arthur},
       title={Cycles on Abelian varieties},
        date={1958},
        ISSN={0002-9939,1088-6826},
     journal={Proc. Amer. Math. Soc.},
      volume={9},
       pages={88\ndash 98},
         url={https://doi.org/10.2307/2033404},
      review={\MR{98752}},
}

\bib{MMSinducing}{article}{
      author={Macrì, Emanuele},
      author={Mehrotra, Sukhendu},
      author={Stellari, Paolo},
      journal={J. Algebraic Geometry},
       title={Inducing stability conditions},
        date={2009-10},
        ISSN={1056-3911, 1534-7486},
      volume={18},
      number={4},
       pages={605\ndash 649},
}

\bib{Polishchuk}{article}{
      author={Polishchuk, Alexander},
       title={Constant families of $t$-structures on derived categories of coherent sheaves},
    language={eng},
        date={2007},
     journal={Moscow Mathematics Journal},
      volume={7},
      number={2},
       pages={109\ndash 134},
}

% \bib{wu2026gnoncomm}{article}{
%       author={{Wu}, Dongjian},
%       author={{Zhang}, Nantao},
%        title={{The $G$-Noncommutative Minimal Model Program}},
%         date={2026-02},
%      journal={arXiv e-prints},
%        pages={arXiv:2602.20335},
%}

\bib{zuliani}{article}{
      author={Zuliani, Vanja},
       title={Semiorthogonal decompositions of projective spaces from small quantum cohomology},
        date={2026},
     journal={Kyoto J. Math., to appear},
         url={https://arxiv.org/abs/2406.17616},
        note={arXiv:2406.17616},
}



\end{biblist}
\end{bibdiv}

\end{document}